\newcounter{contSect} \numberwithin{contSect}{section}
 \numberwithin{contSub}{subsection}
\newtheorem{theorem}[contSect]{Theorem}
\newtheorem{corollary}[contSect]{Corollary}
\newtheorem{lemma}[contSect]{Lemma}
\newtheorem{observation}[contSect]{Observation}
\newtheorem{problem}[contSect]{Problem}
\title{Pair crossing number, cutwidth, and good drawings on arbitrary point sets}
\author{Oriol Solé Pi}
\date{Facultad de Ciencias, Universidad Nacional Autónoma de México, Circuito Exterior,
Coyoacán, 04510, Ciudad de México, México.}
\begin{document}

\maketitle

\abstract{Determining whether there exists a graph such that its crossing number and pair crossing number are distinct is an important open problem in geometric graph theory. We show that $\textit{cr}(G)=O(\mathop{\mathrm{pcr}}(G)^{3/2})$ for every graph $G$, this improves the previous best bound by a logarithmic factor. Answering a question of Pach and Tóth, we prove that the bisection width (and, in fact, the cutwidth  as well) of a graph $G$ with degree sequence $d_1,d_2,\dots,d_n$ satisfies $\mathop{\mathrm{bw}}(G)=O\big(\sqrt{\mathop{\mathrm{pcr}}(G)+\sum_{k=1}^n d_k^2}\big)$. Then we show that there is a constant $C\geq 1$ such that the following holds: For any graph $G$ of order $n$ and any set $S$ of at least $n^C$ points in general position on the plane, $G$ admits a straight-line drawing which maps the vertices to points of $S$ and has no more than $O\left(\log n\cdot\left(\mathop{\mathrm{pcr}}(G)+\sum_{k=1}^n d_k^2\right)\right)$ crossings. Our proofs rely on a modified version of a separator theorem for string graphs by Lee, which might be of independent interest.}

\maketitle

\section{Introduction}\label{sec:intro}

Throughout this text we consider only simple undirected graphs. 

Given a graph $G=(V,E)$, a \textit{drawing} of $G$ is a representation such that the vertices correspond to distinct points on the plane and the edges are represented by simple continuous curves which go from one endpoint to the other but do not pass through any point representing a vertex. We further assume that no two curves are tangent or share an infinite number of points, and no three curves have a point in common. 

The \textit{crossing number} of $G$, $\mathop{\mathrm{cr}}(G)$, is the least number of crossing points amongst all drawings of $G$. The crossing number in one of the most important and studied measures of non-planarity, and it is also relevant from a practical point of view, particularly in graph visualization and VLSI. Similarly, the \textit{pair crossing number}, $\mathop{\mathrm{pcr}}(G)$, corresponds to the minimum number of pairs of crossing edges in all drawings of $G$. The systematic study of the pair crossing number was started by Pach and Tóth \cite{whichcrossingnumber}. Deciding whether $\mathop{\mathrm{cr}}(G)=\mathop{\mathrm{pcr}}(G)$ for all graphs is a major open problem in geometric graph theory, and a considerable amount of effort has been put into bounding $\mathop{\mathrm{cr}}(G)$ in terms of $\mathop{\mathrm{pcr}}(G)$. Tóth \cite{tothinitial} showed that $\textit{cr}(G)=O(\mathop{\mathrm{pcr}}(G)^{7/4}\log^{3/2}\mathop{\mathrm{pcr}}(G))$ and, using the same technique, the bound was later improved  to $\textit{cr}(G)=O(\mathop{\mathrm{pcr}}(G)^{3/2}\log\mathop{\mathrm{pcr}}(G))$ \cite{nearoptimal}, \cite{slightlybetter}. We strengthen this result by getting rid of the logarithmic factor. 

\begin{theorem}\label{teo:pcrbound}
For every graph $G$ we have that $\textit{cr}(G)=O(\mathop{\mathrm{pcr}}(G)^{3/2})$.
\end{theorem}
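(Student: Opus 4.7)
The plan is to convert an optimal pair-crossing drawing $D$ of $G$ (with $p := \operatorname{pcr}(G)$ crossing pairs) into a drawing with $O(p^{3/2})$ crossings counted with multiplicity, via a recursive separator-based redrawing. The engine is the modified Lee-type string-graph separator theorem advertised in the abstract: applied to the string graph $H$ whose vertices are the edge curves of $D$ (adjacent iff they cross), it yields a set $S\subseteq E(G)$ with $|S|=O(\sqrt{p})$ such that $E(G)\setminus S$ splits into $A\sqcup B$ with $|A|,|B|\le 2|E(G)|/3$ and no $A$–$B$ crossings in $D$. The log factors in \cite{nearoptimal,slightlybetter} come from the $\sqrt{\log}$ loss in Fox–Pach-style separators, and eliminating that loss is precisely what should produce the log-free bound.

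After extracting $S,A,B$ as above, I would exploit the fact that no edge of $A$ crosses any edge of $B$ in $D$: the sub-drawings $D|_A$ and $D|_B$ occupy disjoint regions of the plane once the separator curves are viewed as a skeleton, so I can redraw each side independently inside its region and stitch them back along the $S$-skeleton without introducing any $A$–$B$ crossings. Applying induction on $|E(G)|$, I obtain drawings of the two halves with at most $O(p_A^{3/2})$ and $O(p_B^{3/2})$ crossings, where $p_A,p_B$ are the numbers of crossing pairs of $D$ internal to $A$ and $B$ respectively, and $p_A+p_B\le p$.

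Finally I would reinsert the $|S|=O(\sqrt{p})$ separator edges along perturbations of their original curves. Because $S$ is a separator of the string graph $H$ (whose total number of edges is $p$), the curves of $S$ have only $O(\sqrt{p})$ crossings each with the rest on average, so this step introduces at most $O(p)$ extra crossings. Putting everything together yields a recurrence of the shape $f(p)\le f(p_A)+f(p_B)+O(p)$ with $p_A+p_B\le p$, which by the subadditivity $x^{3/2}+y^{3/2}\le (x+y)^{3/2}$ and straightforward induction closes to $f(p)=O(p^{3/2})$.

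The main obstacle is the balance question: the separator balances the \emph{edge count}, not the \emph{crossing count}, so a priori one of $p_A,p_B$ could absorb nearly all of $p$ and the naive convexity inequality alone does not force decrease at each step. I would expect to address this either by invoking a weighted version of the modified separator theorem (weighting each vertex of $H$ by its crossing multiplicity in $D$, to force a genuine balance in $p$), or by an amortized argument that exploits the simultaneous control on $|A|,|B|$: even an unbalanced split of $p$ cannot persist, because the edge count drops by a constant factor at every level of the recursion and trivially forces $p_{\text{leaf}}\le\binom{|A_\text{leaf}|}{2}$ to be small at the base. Getting this accounting right without invoking any auxiliary $\log$ is what separates the target bound $O(p^{3/2})$ from the previous $O(p^{3/2}\log p)$.
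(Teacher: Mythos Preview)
Your skeleton is right—separator on the string graph of edge-curves, recurse, reinsert—but two of the three substantive steps are broken as written, and the paper's fix for the ``main obstacle'' you flag is precisely the point of its new separator theorem.

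\medskip
\textbf{The reinsertion cost is not $O(p)$.} Your sentence ``the curves of $S$ have only $O(\sqrt{p})$ crossings each with the rest on average, so this step introduces at most $O(p)$ extra crossings'' fails on both counts. There is no reason the separator vertices of $H$ have bounded average degree; more importantly, once you have \emph{redrawn} the curves in $A$ and $B$ (even in small neighbourhoods of the originals), a single $S$-curve may cross a redrawn $A$-curve arbitrarily many times. The paper handles this via T\'oth's redrawing step: colour $A\cup B$ blue and $S$ red, and untangle so that every pair crosses at most once while $(BB,BR,RR)$ does not increase lexicographically. After this, the $S$-versus-rest contribution is at most $|S|\cdot l=O(l\sqrt{p})$, where $l$ is the number of crossing edges—not $O(p)$. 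You never invoke this step, and without it the reinsertion bound is simply false.

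\medskip
\textbf{The one-parameter recurrence does not close, and the paper's cure is an \emph{edge}-balanced separator.} You already see that $f(p)\le f(p_A)+f(p_B)+O(p)$ with only $p_A+p_B\le p$ gives $Cp^{3/2}+c'p\not\le Cp^{3/2}$. The paper does not try to rescue this with amortisation over the edge count. Instead it proves and uses Theorem~\ref{teo:edgevertexseparator}: the string graph $H$ (which has $k:=p$ \emph{edges}) admits a separator of size $O(\sqrt{k})$ that is $\tfrac{3}{4}$-balanced \emph{with respect to edges of $H$}, i.e.\ $k_1,k_2\le \tfrac{3}{4}k$. This is exactly the ``weighted version'' you speculate about. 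The induction is then run on the pair $(l,k)$: with $l_1+l_2\le l$ (the crossing edges partition) and $k_i\le\tfrac34 k$, one gets
\[
c\,l_1k_1^{1/2}+c\,l_2k_2^{1/2}\;\le\; c\,(l_1+l_2)\Bigl(\tfrac{3}{4}k\Bigr)^{1/2}\;\le\; c\sqrt{\tfrac34}\,lk^{1/2},
\]
which together with the $O(l\sqrt{k})$ reinsertion term closes to $clk^{1/2}$ for $c$ large. Your vertex-balanced separator (balancing $|A|,|B|$) gives no control on $k_1,k_2$ and forces you into the awkward accounting you describe; the edge-balanced separator removes the obstacle outright.
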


One of the most successful graph parameters in the study of the crossing number has been the \textit{bisection width}, $\mathop{\mathrm{bw}}(G)$, which is the least number of edges whose removal disconnects the graph into two parts having no more than $\frac{2}{3} \lvert V \rvert$ vertices each. The main result connecting the bisection width and the crossing number (the precise statement varies slightly from text to text, see \cite{vlsilayouts},\cite{applicationscrossingnumber} for the version below) tells us that if $G$ is a graph with degree sequence $d_1,d_2,\dots,d_n$, then \[\mathop{\mathrm{bw}}(G)\leq1.58\sqrt{16\mathop{\mathrm{cr}}(G)+\sum_{k=1}^n d_k^2}.\]

From now on, we denote $\sum_{k=1}^n d_k^2$ by $\mathop{\mathrm{ssqd}}(G)$. Pach and Tóth \cite{thirteenproblems} asked whether there is a constant $C$ such that $\mathop{\mathrm{bw}}(G)\leq C\big(\sqrt{\mathop{\mathrm{pcr}}(G)}+\sqrt{\mathop{\mathrm{ssqd}}(G)}\big)$. Almost providing a positive answer to this question, Kolman and Matoušek \cite{matousekpcr} showed that $\mathop{\mathrm{bw}}(G)=O\big(\log n\sqrt{\mathop{\mathrm{pcr}}(G)+\mathop{\mathrm{ssqd}}(G)}\big)$.

The \textit{cutwidth} of $G$, $\mathop{\mathrm{cw}}(G)$, is the least integer $\ell$ for which there is an ordering $v_1,v_2,\dots,v_n$ of its vertices such that the number of edges which have one endpoint in $\{v_1,\dots,v_i\}$ and the other in $\{v_{i+1},\dots,v_n\}$ is at most $\ell$ for every $i$ with $1\leq i\leq n-1$. Djidjev and Vrt’o \cite{cutwidth} showed that $\mathop{\mathrm{cw}}(G)=O\big(\sqrt{\mathop{\mathrm{cr}}(G)+\mathop{\mathrm{ssqd}}(G)}\big)$. Since $\textit{cw}(G)\geq\mathop{\mathrm{bw}}(G)$, this result can be interpreted as a stronger version of the inequality correlating $\mathop{\mathrm{cr}}(G)$ to $\mathop{\mathrm{bw}}(G)$. Settling the problem mentioned in the previous paragraph, we show that this holds for the pair crossing number as well.

\begin{theorem}\label{teo:cutwidth}
For every graph $G$, $\mathop{\mathrm{cw}}(G)=O\big(\sqrt{\mathop{\mathrm{pcr}}(G)+\mathop{\mathrm{ssqd}}(G)}\big)$.
\end{theorem}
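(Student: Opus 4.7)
The plan is to prove Theorem \ref{teo:cutwidth} by establishing, for every graph $G$, a balanced vertex separator of size $O\big(\sqrt{\mathop{\mathrm{pcr}}(G)+\mathop{\mathrm{ssqd}}(G)}\big)$, and then to invoke the standard recursive decomposition argument (in the spirit of Djidjev and Vrt'o's proof of the crossing-number analogue) to turn this separator bound into a cutwidth bound of the same asymptotic order.

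The separator itself I would extract from an optimal drawing of $G$ realizing $p=\mathop{\mathrm{pcr}}(G)$ crossing pairs. Regarding each edge of $G$ as a simple curve in the plane, the intersection graph of these curves is a string graph $H$ on vertex set $E(G)$ with exactly $p$ edges. Weighting each vertex $e=uv$ of $H$ by $\deg_G(u)+\deg_G(v)$ makes the total weight equal to $\mathop{\mathrm{ssqd}}(G)$. The modified Lee separator theorem alluded to in the abstract should then produce a weight-balanced separator $S'\subseteq E(G)$ in $H$ of size $O\big(\sqrt{p+\mathop{\mathrm{ssqd}}(G)}\big)$. Once $S'$ is removed, the remaining curves partition into two non-crossing families $E_1,E_2$ living in disjoint planar regions; a vertex of $G$ incident only to curves in $E_1$ goes into $V_1$, those incident only to $E_2$ into $V_2$, and all other vertices (each an endpoint of some edge of $S'$) form a vertex separator of $G$ of size $O(|S'|)$.

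Feeding this separator theorem into the Djidjev-Vrt'o recursive scheme produces the desired vertex ordering: place the ordering of $V_1$ first, then the separator vertices, then the ordering of $V_2$, and recurse on each side. The cutwidth at any position is controlled by the sum of separator sizes over the ancestors of that position in the recursion tree. Since both $\mathop{\mathrm{pcr}}$ and $\mathop{\mathrm{ssqd}}$ are subadditive (up to a contribution paid by the separator, itself of order $|S'|^2$) under the partition $V_1\cup V_0\cup V_2$ --- every crossing pair of the original drawing lives entirely inside one of the two pieces, precisely because no edge of $E_1$ crosses an edge of $E_2$ --- the familiar telescoping calculation yields $\mathop{\mathrm{cw}}(G)=O\big(\sqrt{p+\mathop{\mathrm{ssqd}}(G)}\big)$.

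The main obstacle is the separator step: Lee's original theorem gives $O(\sqrt{m})$ separators for unweighted string graphs, whereas here one needs a strengthening that handles vertex weights in the optimal $O\big(\sqrt{m+W}\big)$ form, where $W$ is the total weight. This is precisely the ``modified version'' of Lee's theorem announced in the abstract, and I expect proving it without reintroducing the $\log n$ loss of Kolman and Matoušek to be the heart of the argument. A secondary concern is making the recursion genuinely well-behaved: one must check that each restricted piece inherits a drawing whose pair crossing count and squared-degree sum feed cleanly into the telescoping bookkeeping, so that the recursion bottoms out at the claimed estimate without accumulating extra logarithmic factors.
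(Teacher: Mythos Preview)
Your high-level recursion is right, but the separator step has a genuine gap. In your string graph $H$ you only make two edges of $G$ adjacent when they \emph{cross}; you then claim that every vertex of $G$ lying in $V_0$ (i.e.\ incident to edges in both $E_1$ and $E_2$) must be an endpoint of some edge of $S'$. This is false: two edges $e_1\in E_1$ and $e_2\in E_2$ can share an endpoint $v$ without crossing, hence without being adjacent in $H$, and nothing forces either to belong to $S'$. In a planar drawing $H$ has no edges at all, $S'=\emptyset$, and $E_1,E_2$ can be an arbitrary split of $E(G)$, so $V_0$ can be essentially all of $V(G)$. Thus $|V_0|=O(|S'|)$ does not hold, and the vertex separator you extract need not be small.

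The paper repairs exactly this point by building $H$ from the \emph{closures} of the edge-curves, so that any two edges of $G$ sharing an endpoint are also adjacent in $H$; then $H$ has $\mathop{\mathrm{pcr}}(G)+\mathop{\mathrm{bds}}(G)$ edges (with $\mathop{\mathrm{bds}}(G)=\sum_v\binom{d_v}{2}$). The ``modified Lee theorem'' is not a weighted $O(\sqrt{m+W})$ statement as you guessed, but an \emph{edge-balanced} separator: a set of $O(\sqrt{m})$ vertices whose removal leaves two parts each spanning at most $\tfrac{3}{4}m$ edges. Applied to this enlarged $H$, the separator $S'\subset E(G)$ now automatically yields a clean vertex partition of $G$ (edges in different parts can neither cross nor share an endpoint), and the edge-balance gives $\mathop{\mathrm{pcr}}(G_i)+\mathop{\mathrm{bds}}(G_i)\le\tfrac{3}{4}(\mathop{\mathrm{pcr}}(G)+\mathop{\mathrm{bds}}(G))$ directly, so the geometric-series recursion goes through with no extra bookkeeping. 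Once you add the incidence edges to $H$ and switch from vertex-balanced to edge-balanced separators, your outline becomes the paper's proof.
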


\begin{corollary}\label{teo:bwpcr}
For every graph $G$, $\mathop{\mathrm{bw}}(G)=O\big(\sqrt{\mathop{\mathrm{pcr}}(G)+\mathop{\mathrm{ssqd}}(G)}\big)$.
\end{corollary}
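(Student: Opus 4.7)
The plan is very simple: Corollary~\ref{teo:bwpcr} is an immediate consequence of Theorem~\ref{teo:cutwidth} together with the general inequality $\mathop{\mathrm{bw}}(G)\leq\mathop{\mathrm{cw}}(G)$ that holds for every graph, and which is already invoked in the excerpt when the cutwidth bound of Djidjev and Vrt'o is discussed.

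To justify $\mathop{\mathrm{bw}}(G)\leq\mathop{\mathrm{cw}}(G)$, I would take an ordering $v_1,\dots,v_n$ of $V(G)$ realizing the cutwidth and consider the partition $V(G)=\{v_1,\dots,v_i\}\sqcup\{v_{i+1},\dots,v_n\}$ with $i=\lfloor n/2\rfloor$. By the definition of cutwidth, at most $\mathop{\mathrm{cw}}(G)$ edges cross this cut, and both sides have at most $\lceil n/2\rceil\leq\frac{2n}{3}$ vertices (the cases $n\leq 2$ being trivial since then $\mathop{\mathrm{bw}}(G)=0$). This partition therefore certifies that $\mathop{\mathrm{bw}}(G)\leq\mathop{\mathrm{cw}}(G)$.

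Combining this with Theorem~\ref{teo:cutwidth} gives
\[
\mathop{\mathrm{bw}}(G)\leq\mathop{\mathrm{cw}}(G)=O\bigl(\sqrt{\mathop{\mathrm{pcr}}(G)+\mathop{\mathrm{ssqd}}(G)}\bigr),
\]
which is precisely the content of the corollary. There is no substantive obstacle here: all of the difficulty lives in Theorem~\ref{teo:cutwidth}, and the corollary is essentially a one-line deduction from it.
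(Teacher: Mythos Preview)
Your proposal is correct and matches the paper's approach exactly: the paper states the corollary immediately after Theorem~\ref{teo:cutwidth} without a separate proof, relying (as noted in the introduction) on the inequality $\mathop{\mathrm{bw}}(G)\leq\mathop{\mathrm{cw}}(G)$. Your justification of that inequality via the cut at $i=\lfloor n/2\rfloor$ is the standard one-line argument.
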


Since the pathwidth of $G$, $\mathop{\mathrm{pw}}(G)$, satisfies $\mathop{\mathrm{pw}}(G)\leq\mathop{\mathrm{cw}}(G)$, the above inequalities hold as well for this parameter. This had already been observed in \cite{cutwidth} with $\mathop{\mathrm{cr}}(G)$ in place of $\mathop{\mathrm{pcr}}(G)$. 

Corollary \ref{teo:bwpcr} can be used to extend several other results about the crossing number to the pair crossing number. Perhaps the most notable example of this is the following.

\begin{corollary}
Let $p\in(0,1)$ and $e=p\binom{n}{2}$. If $e>10n$ then, with high probability\footnote{An event which depends on $n$ is said to happen \textit{with high probability} (or w.h.p., for short) if the probability that it occurs goes to $1$ as $n$ goes to infinity.}, the pair crossing number of the random graph $G(n,p)$ is larger than $ce^2$ for some absolute constant $c$. 
\end{corollary}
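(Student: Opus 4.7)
The plan is to deduce the bound from Corollary~\ref{teo:bwpcr}, which, upon rearranging, yields
\[
\mathop{\mathrm{pcr}}(G)\;\geq\;\frac{1}{C^{2}}\,\mathop{\mathrm{bw}}(G)^{2}\;-\;\mathop{\mathrm{ssqd}}(G)
\]
for some absolute constant $C>0$. Accordingly, I will show that for $G=G(n,p)$ with $e>10n$, the two estimates $\mathop{\mathrm{bw}}(G)=\Omega(e)$ and $\mathop{\mathrm{ssqd}}(G)=o(e^{2})$ hold w.h.p., after which they combine to give $\mathop{\mathrm{pcr}}(G)=\Omega(e^{2})$ w.h.p.

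For the bisection width I would carry out a direct Chernoff plus union bound argument. For any partition $(A,B)$ of $V(G)$ with $n/3\leq |A|\leq |B|$, the number of edges crossing $(A,B)$ is $\mathrm{Bin}(|A|\,|B|,p)$ with mean at least $\tfrac{2}{9}pn^{2}\geq \tfrac{4}{9}e$. A Chernoff tail bound with an aggressively chosen deviation parameter $\delta\in(0,1)$ bounds the probability this count drops below $(1-\delta)$ times its mean by $\exp(-\alpha_{\delta}\,e)$ for an explicit $\alpha_{\delta}>0$. Summing over the at most $n\cdot 2^{n}$ such cuts, the overall failure probability is at most $n\exp(n\log 2-\alpha_{\delta}\,e)$, which tends to $0$ as long as $e$ exceeds a suitable multiple of $n$; with $\delta$ taken close to $1$, the multiple can be brought down to the range permitted by the hypothesis $e>10n$, giving $\mathop{\mathrm{bw}}(G(n,p))\geq c_{1}\,e$ w.h.p.\ for some absolute $c_{1}>0$.

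For the sum of squared degrees, a direct computation shows that $\mathbb{E}[\mathop{\mathrm{ssqd}}(G(n,p))]=n(n-1)p(1-p)+n(n-1)^{2}p^{2}=O(e+e^{2}/n)$; the assumption $e>10n$ forces $e^{2}/n\gg e$, so this expectation is $O(e^{2}/n)$. Together with standard Chernoff concentration of the individual degrees (or, in the very sparse case, the crude bound $\mathop{\mathrm{ssqd}}(G)\leq n(\max_{i}d_{i})^{2}$ combined with $\max_{i}d_{i}=O(\log n)$ w.h.p.), this yields $\mathop{\mathrm{ssqd}}(G(n,p))=o(e^{2})$ w.h.p.\ in every density regime.

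The one delicate point is calibrating the constants in the Chernoff estimate of the bisection-width step so as to accommodate the relatively weak hypothesis $e>10n$; the remainder is routine concentration combined with the direct application of Corollary~\ref{teo:bwpcr}.
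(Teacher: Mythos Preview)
Your approach is correct and is essentially the same as the paper's: the paper simply notes that the corollary follows from Corollary~\ref{teo:bwpcr} together with the known fact that $\mathop{\mathrm{bw}}(G(n,p))$ is linear in $e$ w.h.p., which is exactly the combination you spell out (with the additional routine verification that $\mathop{\mathrm{ssqd}}(G(n,p))=o(e^{2})$ w.h.p.). Your Chernoff\,+\,union-bound argument for the bisection width and your estimate for $\mathop{\mathrm{ssqd}}$ are both standard and fine; the only place the paper differs is that it cites the bisection-width fact rather than reproving it.
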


Just as the analogous result for $\mathop{\mathrm{cr}}(G(n,p))$ \cite{thirteenproblems}, this follows from the fact that, under the assumptions of the corollary, $\mathop{\mathrm{bw}}(G(n,p))$ is linear on $e$ w.h.p.. This has the nice consequence that, w.h.p., $\mathop{\mathrm{cr}}(G(n,p))$ and $\mathop{\mathrm{pcr}}(G(n,p))$ differ by no more than a multiplicative constant (clearly, both $\mathop{\mathrm{cr}}(G(n,p))$ and $\mathop{\mathrm{pcr}}(G(n,p))$ are $O(e^2)$ w.h.p.). Some other particularly interesting results that can be extended using Corollary \ref{teo:bwpcr} can be found in \cite{monotoneproperties} and \cite{untangling} (in fact, we will need one of these results later on).

Using the graph drawing technique developed by Bhatt and Leighton \cite{bhattleighton} and Even et al. \cite{VLSI} in conjunction with a graph partitioning result obtained by applying the bound $\mathop{\mathrm{bw}}(G)=O\big(\log n\sqrt{\mathop{\mathrm{pcr}}(G)+\mathop{\mathrm{ssqd}}(G)}\big)$ bound to a suitable auxiliary graph, Kolman and Matoušek \cite{matousekpcr} showed that $\mathop{\mathrm{cr}}(G)=O(\log^3 n(\mathop{\mathrm{pcr}}(G)+\mathop{\mathrm{ssqd}}(G)))$. We improve this by a factor of $\log^2 n$; this seems to be best bound that can possibly be attained by means of the drawing framework in \cite{bhattleighton}, \cite{VLSI}. 

\begin{theorem}\label{teo:pcrregularbound}
For every graph $G$ on $n$ vertices, $\mathop{\mathrm{cr}}(G)=O(\log n(\mathop{\mathrm{pcr}}(G)+\mathop{\mathrm{ssqd}}(G)))$.
\end{theorem}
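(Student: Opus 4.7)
The plan is to repeat the argument of Kolman and Matoušek~\cite{matousekpcr} for their $O(\log^3 n\,(\mathop{\mathrm{pcr}}(G) + \mathop{\mathrm{ssqd}}(G)))$ bound, but feed in the stronger bisection estimate from Corollary~\ref{teo:bwpcr} in place of their $\mathop{\mathrm{bw}}(H) = O(\log n\sqrt{\mathop{\mathrm{pcr}}(H) + \mathop{\mathrm{ssqd}}(H)})$. Losing the factor of $\log n$ in the bisection bound, and therefore a factor of $\log^2 n$ after squaring (separator widths enter squared in the Bhatt--Leighton/Even et al.\ framework), brings the overall overhead down from $\log^3 n$ to $\log n$.

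More concretely, one recursively bisects $G$ along balanced edge cuts: at each step on a current subgraph $H$, the two sides of the cut are drawn in disjoint regions of the plane and the $\mathop{\mathrm{bw}}(H)$ separator edges are routed between them, contributing $O(\mathop{\mathrm{bw}}(H)^2)$ crossings. If $H_1, \ldots, H_t$ are the vertex-disjoint subgraphs present at a fixed level of the recursion, then the crossings added at that level are
\[
O\!\left(\sum_{i=1}^t \mathop{\mathrm{bw}}(H_i)^2\right) = O\!\left(\sum_{i=1}^t \bigl(\mathop{\mathrm{pcr}}(H_i) + \mathop{\mathrm{ssqd}}(H_i)\bigr)\right)
\]
by Corollary~\ref{teo:bwpcr}. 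Since the $H_i$ have disjoint edge sets, restricting an optimal drawing of $G$ to each $H_i$ yields a family of sub-drawings whose crossing pairs are pairwise disjoint, so $\sum_i \mathop{\mathrm{pcr}}(H_i) \leq \mathop{\mathrm{pcr}}(G)$; the inequality $\sum_i \mathop{\mathrm{ssqd}}(H_i) \leq \mathop{\mathrm{ssqd}}(G)$ is immediate from the fact that each vertex lies in at most one $H_i$ and has degree there no larger than in $G$. Hence every level of the recursion contributes $O(\mathop{\mathrm{pcr}}(G) + \mathop{\mathrm{ssqd}}(G))$ crossings.

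Because the cuts are balanced, the recursion has depth $O(\log n)$, and summing the per-level contributions gives the desired bound. I expect the main bookkeeping obstacle to be a clean verification that the per-step crossing cost on $H$ really is $O(\mathop{\mathrm{bw}}(H)^2)$, and in particular that routing separator edges through regions already occupied by deeper recursive drawings does not smuggle in an extra logarithmic factor. The author's remark that $O(\log n)$ is the best overhead attainable within this drawing framework indicates that such a logarithm-free per-step bound is indeed what~\cite{bhattleighton} and~\cite{VLSI} provide, so the only genuinely new input needed beyond their layout procedure and the additivity observations above is Corollary~\ref{teo:bwpcr}.
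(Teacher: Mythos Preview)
Your high-level plan and the paper's proof are built on the same recursive-bisection / circular-layout framework of \cite{bhattleighton}, \cite{VLSI}, and your additivity observations (which are exactly the paper's Observation~\ref{obs:partition}) are correct. But the concrete partition you feed into the framework is not the one the paper uses, and the gap you yourself flag is real.

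You propose to bisect using Corollary~\ref{teo:bwpcr}, i.e.\ a \emph{vertex-balanced} cut of size $O(\sqrt{\mathop{\mathrm{pcr}}(H)+\mathop{\mathrm{ssqd}}(H)})$. In the convex/circular layout, an edge $e$ with $t(e)=t$ can cross every edge in $E(t')$ for every $t'$ lying on the leaf-to-leaf path $P(u,v)$ below $t$; so the crossings charged to $e$ are bounded by $\sum_{t'\in P(u,v)}\lvert E(t')\rvert$, not by $\lvert E(t)\rvert$ alone. Turning this path-sum into $O(\sqrt{\mathop{\mathrm{pcr}}(G_t)+\mathop{\mathrm{ssqd}}(G_t)})$ requires that the cut sizes decay geometrically along root-to-leaf paths. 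A vertex-balanced bisection gives no such decay (all of $\mathop{\mathrm{pcr}}+\mathop{\mathrm{ssqd}}$ could sit on one side at every step), so your hoped-for per-step cost $O(\mathop{\mathrm{bw}}(H)^2)$ does not follow, and an extra $\log n$ would indeed creep back in. This is also why Kolman and Matou\v{s}ek do not apply their bisection bound directly but pass through an auxiliary graph to obtain a partition balanced in the right quantity.

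The paper closes this gap by using Lemma~\ref{teo:partitionpcr} instead of Corollary~\ref{teo:bwpcr}: the partition there is balanced with respect to $\mathop{\mathrm{pcr}}+\mathop{\mathrm{bds}}$ (each side retains at most a $\tfrac34$ fraction), which forces $\lvert E(t')\rvert=O\bigl((3/4)^{(\ell(t')-\ell(t))/2}\sqrt{\mathop{\mathrm{pcr}}(G_t)+\mathop{\mathrm{bds}}(G_t)}\bigr)$ along any path below $t$. The path-sum is then geometric, each edge is charged $O(\sqrt{\mathop{\mathrm{pcr}}(G_{t(e)})+\mathop{\mathrm{bds}}(G_{t(e)})})$ crossings, and summing by levels via Observation~\ref{obs:partition} gives $O(\log n(\mathop{\mathrm{pcr}}(G)+\mathop{\mathrm{bds}}(G)))$. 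So the new input you need is not Corollary~\ref{teo:bwpcr} but the stronger Lemma~\ref{teo:partitionpcr} (both ultimately rest on Theorem~\ref{teo:edgevertexseparator}); once you substitute that, the rest of your outline matches the paper's argument.
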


There is a wide variety of graphs for which the above bound is better than the one provided by Theorem \ref{teo:pcrbound} (e.g., graphs with bounded degree and crossing number at least $n$).

A \textit{convex drawing} of a graph is a drawing in which the vertices correspond to a collection of points in convex position and the edges are represented by segments. It is known (\cite{VLSI}, \cite{convexcr}) that any graph $G$ has a convex drawing with no more than $O(\log n(\mathop{\mathrm{cr}}(G)+\mathop{\mathrm{ssqd}}(G)))$ crossings. Shahrokhi et al. \cite{convexcr} also proved that this was asymptotically optimal. The proof of Theorem \ref{teo:pcrregularbound} shows that that this result can be strengthened by substituting $\mathop{\mathrm{cr}}(G)$ by $\mathop{\mathrm{pcr}}(G)$, that is, $G$ has a convex drawing with at most $O(\log n(\mathop{\mathrm{pcr}}(G)+\mathop{\mathrm{ssqd}}(G)))$ crossings. We extend this result in the following way. For a planar set of points in general position, $S$, and a graph $G=(V,E)$ with $\lvert V\rvert\leq\lvert S\rvert$, a \textit{drawing of $G$ on $S$} is a drawing in which the vertices are represented by distinct points of $S$ and the edges are represented by segments.

\begin{theorem}\label{teo:generaldrawing}
There is an absolute constant $C\geq1$ with the following property: For any graph $G$ on $n$ vertices and any set $S$ of at least $n^C$ points in general position on the plane, there is a drawing of $G$ on $S$ with no more than \[O\left(\log n\cdot\left(\mathop{\mathrm{pcr}}(G)+\mathop{\mathrm{ssqd}}(G)\right)\right)\] crossings.
\end{theorem}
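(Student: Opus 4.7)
The plan is to extend the convex-drawing construction behind Theorem \ref{teo:pcrregularbound} to an arbitrary point set in general position by replacing the combinatorial splitting of a convex arc with a genuine geometric splitting of $S$ by lines.

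I would build two binary trees in parallel. The \emph{geometric partition tree} $\mathcal{T}_S$ is obtained by recursively cutting the current region with a line that avoids $S$ and splits the points of the region into two halves of essentially equal size; this yields a tree of depth $O(\log n)$ whose leaves are singletons. The \emph{bisection tree} $\mathcal{T}_G$ is obtained by repeatedly applying Corollary \ref{teo:bwpcr}: at each node with subgraph $H$ we split $V(H)=V_L(H)\sqcup V_R(H)$ with $|V_L(H)|,|V_R(H)|\leq \tfrac{2}{3}|V(H)|$ using a separator of size
\[
b_H \;=\; O\Bigl(\sqrt{\mathop{\mathrm{pcr}}(G[V(H)])+\mathop{\mathrm{ssqd}}(G[V(H)])}\Bigr).
\]
Both trees have depth $O(\log n)$.

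Next I would assign each node $H$ of $\mathcal{T}_G$ to a region $R(H)$ of $\mathcal{T}_S$, embedding $V(H)$ into distinct points of $R(H)\cap S$. The key invariant is $|R(H)\cap S|\geq |V(H)|$: since the graph side shrinks by a factor of at most $2/3$ and the point side by a factor essentially $1/2$ at each step, a direct calculation shows the invariant can be maintained throughout the $O(\log n)$ recursion levels as long as $C$ is a sufficiently large constant. Within each matching step, the endpoints of the separator edges of $H$ are placed at the points of $R(H_L)\cap S$ and $R(H_R)\cap S$ that lie closest to the dividing line $\ell_{R(H)}$; this way the portion of each separator edge inside a child region lies in a thin strip adjacent to $\ell_{R(H)}$.

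The crossing analysis proceeds by attributing every crossing between two edges $e,e'$ to the deepest node $H\in\mathcal{T}_G$ at which both are \emph{active}, meaning they have all four endpoints inside $V(H)$. At this node, at least one of the two edges must be a separator of $H$. Separator-separator crossings at $H$ are bounded by $\binom{b_H}{2}$, and the thin-strip placement bounds the separator-internal crossings by $O(b_H^2)$ as well. Summing over all $H$ and using sub-additivity of $\mathop{\mathrm{pcr}}$ and $\mathop{\mathrm{ssqd}}$ on vertex-disjoint subgraphs yields
\[
\sum_{H\in\mathcal{T}_G} b_H^2 \;=\; \sum_{i=0}^{O(\log n)}\,\sum_{H \text{ at depth } i} b_H^2 \;=\; O\bigl(\log n\cdot(\mathop{\mathrm{pcr}}(G)+\mathop{\mathrm{ssqd}}(G))\bigr),
\]
which is the claimed bound.

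The main obstacle is the $O(b_H^2)$ bound on separator-internal crossings at each node. A naive estimate allows a single separator to cross every internal edge of its child, which is far too weak; the construction must prevent most internal edges from interacting with the separator segments inside each child region. The thin-strip placement, combined with the inductive structure of the drawing and the generous point density inside every region guaranteed by $|S|\geq n^C$, is what makes this bound possible, but executing the geometric argument cleanly on an arbitrary general-position point set---rather than on a convex one---is the technical heart of Theorem \ref{teo:generaldrawing}.
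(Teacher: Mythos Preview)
Your approach diverges from the paper's in an essential way, and the gap you yourself flag---the $O(b_H^2)$ bound on separator--internal crossings---is real and not closed by the thin-strip idea.

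First, the thin-strip placement has a consistency problem. The endpoints of the separator edges of $H$ that lie in $V_L(H)$ are vertices of $G_L$; their positions are therefore not free but are dictated by the recursive construction of the drawing of $G_L$ inside $R(H_L)$. You cannot simultaneously insist that they sit on the points of $R(H_L)\cap S$ nearest to $\ell_{R(H)}$ and that the recursion on $G_L$ proceeds as before. Even if you modified the recursion to absorb these constraints, nothing prevents many internal edges of $G_L$---in particular the separator edges created at the next few levels---from entering that same strip; a thin strip near $\ell_{R(H)}$ can still be crossed by $\Omega(|E(G_L)|)$ straight-line edges, so the bound $O(b_H^2)$ does not follow. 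More structurally, in a binary BSP a single segment can be split into as many as $2^d$ pieces at depth $d$, so a top-level separator edge may interact with \emph{every} cell at depth $d$; with the $3/4$ decay from Lemma~\ref{teo:partitionpcr} the resulting geometric series diverges.

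The paper avoids this altogether by a different geometric decomposition. Instead of halving by a line, it applies the same-type lemma (Corollary~\ref{teo:same-type disjoint}) to obtain, at each step, $k$ subsets of the current point set with pairwise disjoint convex hulls and same-type transversals; by Observation~\ref{obs:same-type}, any line meets at most two of these $k$ convex hulls. Hence, passing from one level to the next, the number of cells a fixed edge can touch at most doubles, while there are $k$ cells. By choosing $k$ a large constant (via Lemma~\ref{teo:partitiongraph}) so that $\mathop{\mathrm{pcr}}+\mathop{\mathrm{bds}}$ drops by a factor at least $5$ per level, the contribution $2^d\cdot 5^{-d/2}$ is summable, and the total number of crossings comes out $O(\log n\,(\mathop{\mathrm{pcr}}(G)+\mathop{\mathrm{bds}}(G)))$. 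The requirement $|S|\ge n^C$ is exactly what guarantees that after $O(\log_k n)$ rounds of losing a $c(k)$ fraction of the points each time, enough points remain in every cell. In short: the key new ingredient is the same-type lemma, not a strip argument, and the branching parameter $k$ is what makes the crossing sum converge.
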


We remark that the logarithmic term above cannot be removed, as it was shown in \cite{convexcr} that every convex drawing of an $m\times m$ grid has at least $\Omega(m^2\log m)$ crossings. The technique used in the proof of Theorem \ref{teo:generaldrawing} can be thought of as a two dimensional version of the one in \cite{bhattleighton} and \cite{VLSI}. Notice that the weaker result with $2^{\Theta(n)}$ in place of $n^C$ follows directly from the celebrated Erdős-Szekeres theorem \cite{erdosszekeres} and the existence of convex drawings with $O\left(\log n\cdot\left(\mathop{\mathrm{pcr}}(G)+\mathop{\mathrm{ssqd}}(G)\right)\right)$ crossings.

A \textit{string graph} is an intersection graph of a collection of curves in the plane. The proof of the $\mathop{\mathrm{cr}}(G)$ bound in \cite{slightlybetter} depends on a separator theorem for string graphs by Lee \cite{Lee}. Our proofs of Theorems \ref{teo:pcrbound} and \ref{teo:cutwidth} rely on a modified version of the result by Lee, which might be of independent interest (see Theorem \ref{teo:edgevertexseparator}).

In Section \ref{sec:separators}, we prove the modified version of the separator theorem of Lee, which we mentioned above, and then use it to prove a couple graph partitioning results and Theorem \ref{teo:cutwidth}. Section \ref{sec:pcrbounds} is devoted to proving Theorems \ref{teo:pcrbound} and \ref{teo:pcrregularbound}. Section \ref{sec:drawing} contains the proof of Theorem \ref{teo:generaldrawing}. Finally, a couple of minor results and some open problems are discussed in Section \ref{sec:final}.

All logarithms in this text are base $2$. For a graph $G=(V,E)$ and a subset $U$ of $V$, we denote by $G[U]$ the subgraph of $G$ induced by $U$. The letter $c$ will be used in various sections to denote different constants; while this is a slight abuse of notation, it should not be the cause of any confusion.

\section{Separators, graph partitions, and cutwidth}\label{sec:separators}
Consider a graph $G=(V,E)$. For $r\in(0,1)$, a set of vertices $S\subset V$ is an $r$-\textit{balanced separator} if there is a partition $V=V_1\cup V_2\cup S$ such that no edge runs between $V_1$ and $V_2$, and $\lvert V_1 \rvert,\lvert V_2 \rvert\leq r\lvert V \rvert$.

Recall that a string graphs is an intersection graph of a collection of curves in the plane. The results in this revolve around  around the following result by Lee \cite{Lee}, which was originally conjectured by Fox and Pach \cite{fox_pach_2010} and later shown to be true up to a logarithmic factor by Matoušek \cite{nearoptimal}.

\begin{theorem}\label{teo:separator}
Every string graph with $m$ edges has a $\frac{2}{3}$-balanced separator of size $O(\sqrt{m})$.
\end{theorem}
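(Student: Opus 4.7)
The plan is to follow the multi-commodity-flow paradigm that originally gave Lipton--Tarjan-type separators for planar graphs, upgrading each ingredient to handle the richer class of string graphs. Specifically, I would first aim for the stronger \emph{sparsest cut} version: every string graph with $m$ edges has a cut $(A,\bar A)$ of density at most $O(1/\sqrt{m})$. A standard balancing/peeling argument then turns any such sparse edge cut into a $\tfrac23$-balanced vertex separator of size $O(\sqrt{m})$, which is the conclusion sought. (One takes a sparse cut, moves the smaller side into the separator if it is too small, recurses on the larger side, and uses the fact that the sizes of the separators produced along the recursion form a geometric series.)

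Next I would set up the uniform concurrent multi-commodity flow LP on $G$: unit edge capacities, and a unit demand between every pair of vertices, maximising the fraction $\lambda$ that can be simultaneously routed. By Leighton--Rao duality, the sparsest-cut ratio equals (up to constants) $1/\lambda$, and in general graphs this duality has an $O(\log n)$ gap. The crux of the theorem is to reduce that gap to $O(1)$ for string graphs. The dual of the LP produces a semimetric $d$ on $V(G)$; one then needs a random low-diameter partition of $(V,d)$ whose boundary probabilities are $O(1/\operatorname{diam})$, i.e.\ a Klein--Plotkin--Rao style padded decomposition with no $\log$ loss.

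This is the step where I would invoke the actual geometry of curves. Given a string realisation of $G$, one can uncross and retract pairs of curves so that their intersection pattern is encoded by a planar multigraph $H$ whose faces and edges account for the $m$ intersections. The plan is to transport the dual metric $d$ to $H$ via a Lipschitz map, apply a padded decomposition on the planar surface (where such decompositions are known to have constant distortion, by results in the spirit of Klein--Plotkin--Rao and their refinements), and then pull the resulting partition back to $G$. Combined with the flow LP, this yields a sparsest cut of ratio $O(1/\sqrt{m})$ and hence the desired separator.

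The hard part is exactly this final step: shaving the $\sqrt{\log m}$ that Matou\v sek's earlier approach incurs. What makes it subtle is that the ``planarization'' of a string graph is not itself a planar graph on $n$ vertices, but a planar structure on $\Theta(m)$ vertices into which $V(G)$ is embedded, so one must be careful that the Lipschitz extension back to $V(G)$ does not inflate the cut. Controlling this — essentially, showing that the intersection pattern of curves has tame enough random-partition behaviour that no logarithmic factor is necessary — is the decisive geometric input, and the rest of the argument (LP setup, duality, balancing) is routine.
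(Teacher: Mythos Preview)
The paper does not prove this statement at all: Theorem~\ref{teo:separator} is quoted as Lee's result and used as a black box, so there is no ``paper's own proof'' to compare against. Everything in Section~\ref{sec:separators} sits on top of Lee's theorem rather than inside it.

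As for the proposal itself, it is a roadmap rather than a proof. You correctly identify the architecture (flow/cut LP, dual metric, padded decomposition, balancing), and you correctly flag that the entire content of the theorem lives in the single step where the $\sqrt{\log m}$ loss of Matou\v{s}ek's argument must be removed. But the suggested mechanism for that step---``planarize the string representation, apply a KPR-type decomposition on the planar surface, Lipschitz-pull it back to $V(G)$''---does not work as stated and is not what Lee does. Two concrete issues: (i) a string representation need not have only $\Theta(m)$ intersection points, so the auxiliary planar object you build is not a planar graph on $\Theta(m)$ vertices in any direct sense; (ii) even granting a controlled planarization, the pullback from the planar host to the string graph is exactly where the loss reappears in earlier approaches, and nothing in the sketch explains why it would be $O(1)$ here. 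Lee's actual argument treats string graphs as region intersection graphs over a planar host and transfers \emph{vertex-capacitated} flows in $G$ to \emph{edge-capacitated} flows in the host with bounded congestion, then invokes the constant flow--cut gap for excluded-minor graphs; the congestion-transfer lemma is the real geometric input, and it is not the same as a Lipschitz pullback of a padded decomposition. So the proposal names the right difficulty but does not supply the idea that overcomes it.
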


For $r\in(0,1)$, we say that a set of vertices $S\subset V$ is an $r$-\textit{edge-balanced separator} if there is a partition $V=V_1\cup V_2\cup S$ such that there is no edge between $V_1$ and $V_2$, and the the induced subgraphs $G[V_1]$ and $G[V_2]$ contain at most $r\lvert E \rvert$ edges each. Using Theorem \ref{teo:separator}, we prove the following.

\begin{theorem}\label{teo:edgevertexseparator}
Let $r\in(\frac{2}{3},1)$, then every string graph with $m$ edges contains an $r$-edge-balanced separator of size $O_r(\sqrt{m})$, where the hidden constant depends only on $r$. Furthermore, if $r>\frac{3}{4}$ then the string graph contains a set of at $O_r(\sqrt{m})$ vertices that is simultaneously a  $\frac{2}{3}$-balanced separator and an $r$-edge-balanced separator.
\end{theorem}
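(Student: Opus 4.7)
The plan is to derive Theorem~\ref{teo:edgevertexseparator} from Theorem~\ref{teo:separator} (Lee's vertex-balanced separator theorem for string graphs) by iterated application, combined with a careful combinatorial argument for assembling the resulting pieces. Both parts begin by applying Theorem~\ref{teo:separator} to $G$, producing a partition $V = V_1 \cup V_2 \cup S_0$ with $|S_0| = O(\sqrt{m})$ and $|V_1|, |V_2| \le \tfrac{2}{3}|V|$. Writing $m_i := |E(G[V_i])|$, if both $m_i \le rm$ then $S_0$ itself already witnesses both conclusions. Otherwise, without loss of generality $m_1 > rm$, which forces $m_2 \le (1-r)m$.

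For part (a), I apply Theorem~\ref{teo:separator} a second time to the induced (string) subgraph $G[V_1]$, splitting $V_1 = V_1^a \cup V_1^b \cup T_1$ with $|T_1| = O(\sqrt{m_1}) = O(\sqrt{m})$ and (after relabeling) $|E(G[V_1^a])| \le m_1/2 \le m/2$. Setting $A := V_1^b$, $B := V_2 \cup V_1^a$, and $S := S_0 \cup T_1$ yields $|E(G[B])| \le (1-r)m + m/2 = (3/2 - r)m$, which is already $\le rm$ as soon as $r \ge \tfrac{3}{4}$. For $r \in (\tfrac{2}{3}, \tfrac{3}{4}]$, or whenever $|E(G[A])|$ still exceeds $rm$, I iterate: repeatedly apply Theorem~\ref{teo:separator} to the current heavy side and transfer its smaller-edge half across. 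Since the vertex count of the heavy side contracts by a factor of at least $\tfrac{2}{3}$ per iteration, the recursion terminates, and a careful accounting keeps the total separator size at $O_r(\sqrt{m})$.

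For part (b), the additional vertex-balance constraint $|A|, |B| \le \tfrac{2}{3}|V|$ forbids directly merging $V_1^a$ into $V_2$'s side, since $|V_2| + |V_1^a|$ may exceed $\tfrac{2}{3}|V|$. Instead, I iterate Theorem~\ref{teo:separator} on $V_1$ until it is split into pieces whose vertex counts are all at most $\tfrac{1}{3}|V|$, and then greedily distribute these pieces (alongside $V_2$) between two sides respecting both the vertex budget $\tfrac{2}{3}|V|$ and the edge budget $rm$. The condition $r > \tfrac{3}{4}$ is exactly what makes this two-criterion assembly feasible: the edge slack $(1-r)m < m/4$ in $V_2$, combined with at most $m/2$ in transferred $V_1$ edges, stays strictly below $rm$.

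I expect the main obstacle to be bounding the total separator size by $O_r(\sqrt{m})$ across all iterations without accumulating a logarithmic factor that a naive analysis would produce. Controlling this requires an amortization argument exploiting the concavity of $\sqrt{\cdot}$, together with the observation that at any moment at most a bounded number of pieces (with the bound depending only on $r$) can carry more than a constant fraction of the total edges, since such pieces are pairwise disjoint in $G$; an alternative is to establish and invoke a weighted variant of Theorem~\ref{teo:separator} with vertex weights $\deg_G(v)$, which yields the $r$-edge-balanced separator for part (a) directly.
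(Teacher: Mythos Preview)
Your main line of attack for part~(a) has a genuine gap. Theorem~\ref{teo:separator} guarantees only \emph{vertex} balance: when you split $G[V_1]$ into $V_1^a\cup V_1^b\cup T_1$, nothing prevents one side from carrying essentially all of the $m_1$ edges while the other carries none. The sentence ``(after relabeling) $|E(G[V_1^a])|\le m_1/2$'' is therefore unjustified. In the iteration you describe, the vertex count of the heavy side shrinks by $2/3$ each round, but its edge count need not shrink at all until the vertex count drops below $\sqrt{m}$; that takes $\Theta(\log m)$ rounds, each contributing a separator of size $\Theta(\sqrt m)$, and you end up with $O(\sqrt m\log m)$ rather than $O_r(\sqrt m)$. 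The amortization you sketch does not rescue this: successive heavy pieces are \emph{nested}, not disjoint, so their edge counts do not sum to $m$ and concavity of $\sqrt{\cdot}$ gives you nothing. The same issue infects your part~(b), where the vertex-small pieces of $V_1$ you produce may individually carry more than $rm$ edges, so the greedy distribution cannot respect the edge budget.

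What you list as an ``alternative'' at the very end --- a weighted version of Theorem~\ref{teo:separator} with weights $\deg_G(v)$ --- is in fact the missing idea, and the paper's proof is precisely a concrete implementation of it inside the string-graph world. For every edge of $G$ one plants $t$ tiny auxiliary curves near a chosen intersection point, each crossing only the two curves of that edge. The resulting string graph $H$ has $|V|+tm$ vertices and $(2t+1)m$ edges, and a \emph{single} application of Theorem~\ref{teo:separator} to $H$ gives a $\tfrac{2}{3}$-vertex-balanced separator of size $O_t(\sqrt m)$ containing no auxiliary curves; since each edge of $G$ contributes $t$ auxiliary vertices to whichever side it lies in, the induced parts of $G$ have at most $\tfrac{2}{3}\cdot\tfrac{t+2}{t}\,m$ edges each, which is $\le rm$ once $t$ is large enough. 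No iteration, no log factor. For part~(b) the paper then combines at most three separators (one from Lee, one from part~(a) applied to the heavy side, and possibly one more from Lee) in a short case analysis; since part~(a) now delivers edge balance in one shot, the total stays $O_r(\sqrt m)$.
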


\begin{proof}
Let $G=(V,E)$ be as in the theorem. We restrict ourselves to the case in which $G$ has no isolated vertex. 

First, we show that $G$ contains an $r$-edge-balanced separator of size $O_r(\sqrt{m})$. Consider a collection of curves realizing $G$, we may assume that no three curves go through the same point.  Let $t$ be a fixed positive integer, to be specified later.  We construct an auxiliary string in order to apply Theorem \ref{teo:separator}. For each pair of adjacent curves, choose an arbitrary common point between them and add $t$ pairwise disjoint \textit{auxiliary curves} around it that cross both of the original curves but intersect no other curve. Let $H\supset G$ be the string graph that corresponds to this new family of curves, then $H$ has $\lvert V \rvert+tm$ vertices and $(t+1)m$ edges. There is clearly a $\frac{2}{3}$-balanced separator for $H$ of minimal size which contains none of the auxiliary curves and, by Theorem \ref{teo:separator}, it has no more than $c_t\sqrt{m}$ vertices for some $c_t$ that depends only on $t$. Let $S_H$ be such a separator and consider the partition $V_{H,1}\cup V_{H,2}\cup S_H$ induced by $S_H$; we have that $\lvert V_{H,1}\rvert, \lvert V_{H,2}\rvert\leq \frac{2}{3}(\lvert V\rvert+tm)\leq \frac{2}{3}(t+2)m$. Notice that $S_H$ is also a separator in $G$ and that the two induced subgraphs $G[V_{H,1}\cap V]$ and $G[V_{H,2}\cap V]$ have at most $\lvert V_{H,1}\rvert/t$ and $\lvert V_{H,2}\rvert/t$ edges, respectively (each edge in the induced subgraphs corresponds to $t$ auxiliary curves). Since both of these quantities are at most $\frac{2}{3}\frac{t+2}{t}m$, taking $t$ large enough yields the result.

For the second part, we follow the proof of Lemma 5 in \cite{VLSI}. Let $S\subset V$ be a $\frac{2}{3}$-balanced separator of minimal size and $V=V_1\cup V_2\cup S$ be the partition induced by $S$, so that $G[V_1]$ has at most as many edges as $G[V_2]$. If $G[V_2]$ contains no more than $r\lvert E\rvert$ edges, then $S$ is the desired separator. Otherwise, take a small $\epsilon>0$ (to be specified later) and, within $G[V_2]$, consider a $\left(\frac{2}{3}+\epsilon\right)$-edge-balanced separator $S'\subset V_2$ of minimal size and the partition $V_2=V_3\cup V_4\cup S'$ induced by it; suppose that $\lvert V_3\rvert\leq\lvert V_4\rvert$. If $\lvert V_4\rvert\geq \frac{1}{3}\lvert V\rvert$, then we are done by taking the separator $S\cup S'$ and the partition $V=(V_1\cup V_3)\cup(V_4)\cup(S\cup S')$. Suppose that $\lvert V_4\rvert< \frac{1}{3}\lvert V\rvert$ and let $S''$ be a $\frac{2}{3}$-balanced separator of minimal size for $G[V_1]$ and $V_1=V_5\cup V_6\cup S''$ be the partition induced by it; assume that $\lvert V_5\rvert\leq\lvert V_6\rvert$. We claim that (if $\epsilon$ is small enough) the separator $Z=S\cup S'\cup S''$ and the partition $V=(V_3\cup V_6)\cup(V_4\cup V_5)\cup Z$ have the required properties. 

For starters, both $G[V_3\cup S\cup S'']$ and $G[V_4\cup S\cup S'']$ contain at least  $r\lvert E\rvert\left(\frac{1}{3}-\epsilon\right)$ edges. Since $r>\frac{3}{4}$, for small enough $\epsilon$ this quantity will be larger than $(1-r)\lvert E\rvert$. It follows that both $G[V_3\cup V_6]$ and $G[V_4\cup V_5]$ have at most $r\lvert E\rvert$ edges, so $Z$ is an $r$-edge-balanced separator.

Now we prove that $Z$ is a $\frac{2}{3}$-balanced separator. We have that \[\lvert V_3\rvert+\lvert V_6\rvert\leq\frac{1}{2}\lvert V_2\vert+\frac{2}{3}\lvert V_1\rvert\leq\frac{2}{3}\lvert V\rvert\] and \[\lvert V_4\rvert+\lvert V_5\rvert\leq\frac{1}{3}\lvert V\rvert+\frac{1}{2}\lvert V_1\rvert\leq\frac{1}{3}\cdot\frac{2}{3}\lvert V\rvert\leq\frac{2}{3}\lvert V\rvert.\] The result follows.
\end{proof}

Although we will require only the first part of the statement of Theorem \ref{teo:edgevertexseparator}, we believe the second one might be interesting on its own.

For any $G$ with degree sequence $d_1,d_2,\dots,d_n$, we denote $\sum_{i=1}^n \binom{d_i}{2}$ by $\mathop{\mathrm{bds}}(G)$. The graph partitioning result below will be the main tool in the proof of Theorem \ref{teo:cutwidth}.

\begin{lemma}\label{teo:partitionpcr}
For any graph $G$, there is a set consisting of $O\big(\sqrt{\mathop{\mathrm{pcr}}(G)+\mathop{\mathrm{bds}}(G)}\big)$ edges whose removal disconnects the graph into two (not necessarily connected) subgraphs $G_1$ and $G_2$ so that \[\mathop{\mathrm{pcr}}(G_1)+\mathop{\mathrm{bds}}(G_1)\leq \frac{3}{4}\left(\mathop{\mathrm{pcr}}(G)+\mathop{\mathrm{bds}}(G)\right),\]
\[\mathop{\mathrm{pcr}}(G_2)+\mathop{\mathrm{bds}}(G_2)\leq \frac{3}{4}\left(\mathop{\mathrm{pcr}}(G)+\mathop{\mathrm{bds}}(G)\right).\]
\end{lemma}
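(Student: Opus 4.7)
The plan is to form an auxiliary string graph $H$ from an optimal drawing of $G$ and then apply Theorem \ref{teo:edgevertexseparator} to it. Fix a drawing of $G$ attaining $\mathop{\mathrm{pcr}}(G)$ pairs of crossing edges and let $V(H)=E(G)$, declaring two vertices of $H$ adjacent if and only if the corresponding curves in the drawing share at least one point. Two such curves meet precisely when the corresponding edges of $G$ either cross in the drawing or share an endpoint in $G$; since every pair of edges sharing a vertex $v$ is counted once in $\binom{d_v}{2}$, this gives
\[|E(H)|\leq\mathop{\mathrm{pcr}}(G)+\mathop{\mathrm{bds}}(G).\]

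Apply Theorem \ref{teo:edgevertexseparator} to $H$ with $r=3/4\in(2/3,1)$. This produces a separator $S\subseteq V(H)$ of size $O\big(\sqrt{|E(H)|}\,\big)=O\big(\sqrt{\mathop{\mathrm{pcr}}(G)+\mathop{\mathrm{bds}}(G)}\,\big)$ and a partition $V(H)=V_1\sqcup V_2\sqcup S$ with no $H$-edges between $V_1$ and $V_2$ and $|E(H[V_i])|\leq \tfrac{3}{4}|E(H)|$ for $i=1,2$.

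Translate back to $G$: interpret $S,V_1,V_2$ as edge subsets $E_S,E_1,E_2$ of $E(G)$. The absence of $H$-edges between $V_1$ and $V_2$ implies in particular that no edge of $E_1$ shares an endpoint with any edge of $E_2$, so the vertex sets spanned by $E_1$ and $E_2$ are disjoint in $V(G)$. Assign each remaining vertex of $G$ arbitrarily to one of the two sides, forming disjoint $W_1,W_2$ with $V(G)=W_1\cup W_2$, and let $G_i$ be the subgraph with vertex set $W_i$ and edge set $E_i$. The edges removed from $G$ comprise $E_S$, whose size is within the claimed bound.

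To finish, restrict the fixed drawing of $G$ to the edges of $G_i$ to obtain a drawing of $G_i$. The edges of $H[V_i]$ are exactly the crossing pairs of $E_i$ in this drawing together with the pairs of edges in $E_i$ that share a vertex, so the number of crossing pairs in the induced drawing is at most $|E(H[V_i])|-\mathop{\mathrm{bds}}(G_i)$, and therefore
\[\mathop{\mathrm{pcr}}(G_i)+\mathop{\mathrm{bds}}(G_i)\leq|E(H[V_i])|\leq\tfrac{3}{4}|E(H)|\leq\tfrac{3}{4}\big(\mathop{\mathrm{pcr}}(G)+\mathop{\mathrm{bds}}(G)\big),\]
as required. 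The one conceptual step is the definition of $H$: encoding shared-endpoint pairs as $H$-edges is precisely what makes $\mathop{\mathrm{bds}}(G)$ appear as additional edges of the string graph and simultaneously forces the separator $S$ to produce a genuine vertex-disjoint decomposition of $G$ after the edges of $E_S$ are deleted. The remainder is a bookkeeping argument, so I do not expect a serious obstacle beyond selecting the right value of $r$ in Theorem \ref{teo:edgevertexseparator}.
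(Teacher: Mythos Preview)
Your approach is essentially the paper's, and the construction of $H$ is the right idea. There is, however, a gap in the final counting step. A pair of edges of $G$ that both share an endpoint \emph{and} cross in the chosen drawing contributes $1$ to $\mathop{\mathrm{pcr}}(G)$ and $1$ to $\mathop{\mathrm{bds}}(G)$, yet produces only a single edge of $H$. Writing $A_i$ for the set of crossing pairs inside $E_i$ and $B_i$ for the set of incident pairs, you have $|E(H[V_i])|=|A_i\cup B_i|$, so
\[
\mathop{\mathrm{pcr}}(G_i)+\mathop{\mathrm{bds}}(G_i)\leq |A_i|+|B_i|=|E(H[V_i])|+|A_i\cap B_i|,
\]
not $|E(H[V_i])|$ as you claim; equivalently, your assertion that the number of crossing pairs is at most $|E(H[V_i])|-\mathop{\mathrm{bds}}(G_i)$ points the wrong way. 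Nothing in the separator controls where the adjacent-and-crossing pairs land, so $|A_i\cap B_i|$ can be as large as $|A_G\cap B_G|$ and the factor $\tfrac{3}{4}$ is lost. It is not known that a $\mathop{\mathrm{pcr}}$-optimal drawing can be taken free of adjacent crossings, so this cannot simply be assumed away.

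The paper patches exactly this point: for every pair of edges that both cross and share an endpoint it adds a small auxiliary curve meeting just one of the two edges and nothing else. Each auxiliary curve becomes a degree-$1$ vertex of $H$, and now $|E(H)|=\mathop{\mathrm{pcr}}(G)+\mathop{\mathrm{bds}}(G)$ exactly, with every doubly counted pair contributing two edges to $H$. After applying Theorem~\ref{teo:edgevertexseparator} one discards the auxiliary curves from the separator (having degree $1$, each can be moved to the side of its unique neighbour without breaking the partition), and then your bookkeeping goes through verbatim.
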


\begin{proof}
Consider a drawing $D$ of $G$ which exhibits $\mathop{\mathrm{pcr}}(G)$. For each pair of edges which cross and share an endpoint we draw a small auxiliary curve that intersects only one of them and no other edge. Consider the string graph $H$ induced by the closures of the resulting collections of curves (this way, any two edges sharing an endpoint correspond to adjacent vertices of $H$). Notice that each edge in this string graph corresponds, in a way, to either a pair of crossing edges of $G$ or a pair of edges of $G$ with a common endpoint. By Theorem \ref{teo:edgevertexseparator}, $H$ admits a $\frac{3}{4}$-edge-balanced separator of size $O\big(\sqrt{\mathop{\mathrm{pcr}}(G)+\mathop{\mathrm{bds}}(G)}\big)$. It is not hard to see that, after removing from it all of the auxiliary curves, this separator does the job.
\end{proof}

\begin{proof}[Proof of Theorem \ref{teo:cutwidth}]
Let $c$ be the hidden constant in Lemma \ref{teo:partitionpcr} and consider a set of at most $c\sqrt{\mathop{\mathrm{pcr}}(G)+\mathop{\mathrm{bds}}(G)}$ edges whose removal disconnects the graph into subgraphs $G_1$ and $G_2$, as in said corollary. By placing all vertices of $G_1$ before those of $G_2$ in the linear order and adding back in the deleted edges, we get that \[\mathop{\mathrm{cw}}(G)\leq\max\{\mathop{\mathrm{cw}}(G_1),\mathop{\mathrm{cw}}(G_2)\}+c\sqrt{\mathop{\mathrm{pcr}}(G)+\mathop{\mathrm{bds}}(G)},\] which, due to the bounds given by Lemma \ref{teo:partitionpcr}, solves to \[\mathop{\mathrm{cw}}(G)< c\sum_{i=0}^\infty\left(\sqrt{\frac{3}{4}}\right)^i\sqrt{\mathop{\mathrm{pcr}}(G)+\mathop{\mathrm{bds}}(G)}=O\left(\sqrt{\mathop{\mathrm{pcr}}(G)+\mathop{\mathrm{bds}}(G)}\right),\] as desired.
\end{proof}

The following artificial looking result will be important in Section \ref{sec:drawing}; we chose to include it here since it is closely tied to the proof of Theorem \ref{teo:cutwidth}.

\begin{lemma}\label{teo:carefulpartition}
Let $G$ be a simple graph with $n\geq 10000$ vertices. Consider a sequence of integers $0=a_0<a_1<a_2<\dots<a_{1000}=n$ such that $a_{i+1}-a_i\leq2(a_{j+1}-a_j)$ for all $0\leq i,j\leq 999$. Then there is a set consisting of $O\big(\sqrt{\mathop{\mathrm{pcr}}(G)+\mathop{\mathrm{bds}}(G)}\big)$ edges whose removal disconnects the graph into two (not necessarily connected) subgraphs $H_1$ and $H_2$ with $a_t$ and $n-a_t$ vertices ($1\leq t\leq 999$), respectively, so that \[\mathop{\mathrm{pcr}}(H_1)+\mathop{\mathrm{bds}}(H_1)\leq \frac{7}{8}\left(\mathop{\mathrm{pcr}}(G)+\mathop{\mathrm{bds}}(G)\right),\] \[\mathop{\mathrm{pcr}}(H_2)+\mathop{\mathrm{bds}}(H_2)\leq \frac{7}{8}\left(\mathop{\mathrm{pcr}}(G)+\mathop{\mathrm{bds}}(G)\right).\]
\end{lemma}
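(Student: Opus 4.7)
The plan is to first produce a coarse partition of $G$ via Lemma \ref{teo:partitionpcr}, and then refine it --- using either Theorem \ref{teo:cutwidth} or further applications of Lemma \ref{teo:partitionpcr} inside one of the parts --- until one part has exactly $a_t$ vertices for some $t \in \{1, \ldots, 999\}$. The slack between the balance constants $\tfrac{3}{4}$ and $\tfrac{7}{8}$ is what allows the refinement step to be absorbed.

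I would apply Lemma \ref{teo:partitionpcr} to $G$ to obtain $V(G) = V_1 \sqcup V_2$ with $\mathop{\mathrm{pcr}}(G[V_i]) + \mathop{\mathrm{bds}}(G[V_i]) \le \tfrac{3}{4}\,S$ for $i = 1, 2$ and edge cut of size $O(\sqrt{S})$, where $S := \mathop{\mathrm{pcr}}(G) + \mathop{\mathrm{bds}}(G)$. The constraint on the $a_i$'s forces every gap $a_{i+1}-a_i$ to lie in $[n/2000,\,n/1000]$, so for any $x \in [0,n]$ there exists $t \in \{1, \ldots, 999\}$ with $|a_t - x| \le n/500$ (edge cases where $|V_1|$ is near $0$ or $n$ are handled using $n \ge 10000$ and by swapping the roles of $V_1$ and $V_2$). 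Applying this with $x = |V_1|$ and WLOG $a_t \ge |V_1|$, set $k := a_t - |V_1| \le n/500$, the number of vertices to be transferred from $V_2$ to $V_1$. By Theorem \ref{teo:cutwidth} applied to $G[V_2]$ (which has pcr+bds at most $\tfrac{3}{4}S$), there is a linear ordering of $V_2$ in which every prefix cut has $O(\sqrt{S})$ edges; let $W$ be its initial segment of length $k$, and set $H_1 := G[V_1 \cup W]$, $H_2 := G[V_2 \setminus W]$. The edge cut between $H_1$ and $H_2$ is contained in the original $V_1$-$V_2$ cut together with the cutwidth-prefix cut of $W$ inside $V_2$, and so has size $O(\sqrt{S})$. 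Monotonicity of pcr and bds under induced subgraphs gives $\mathop{\mathrm{pcr}}(H_2) + \mathop{\mathrm{bds}}(H_2) \le \tfrac{3}{4}S \le \tfrac{7}{8}S$ for free.

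The main obstacle is bounding $\mathop{\mathrm{pcr}}(H_1) + \mathop{\mathrm{bds}}(H_1) \le \tfrac{7}{8}S$. In the optimal drawing $D$ of $G$, the pairs of $H_1$-edges that cross or share an endpoint decompose into (i) pairs entirely inside $E(G[V_1])$, which contribute at most $\tfrac{3}{4}S$ from the first application of Lemma \ref{teo:partitionpcr}; and (ii) pairs involving at least one edge incident to some $w \in W$. The second class is bounded by $\sum_{w \in W}\mathrm{contr}(w)$, where $\mathrm{contr}(v) := \binom{d_v}{2} + |\{\text{crossings of }D\text{ involving an edge incident to }v\}|$; summing gives $\sum_v \mathrm{contr}(v) \le \mathop{\mathrm{bds}}(G) + 4\,\mathop{\mathrm{pcr}}(G) \le 5S$. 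The crux is forcing $\sum_{w \in W}\mathrm{contr}(w) \le \tfrac{1}{8}S$ \emph{simultaneously} with $W$ being a cutwidth-short prefix --- these two conditions need not be compatible for an arbitrary cutwidth ordering, and this is exactly where the hard work lies. I would resolve this by exploiting the $999$ admissible values of $t$: averaging $\sum_{w \in W_t}\mathrm{contr}(w)$ over the nested family of prefixes $W_t$ (of lengths $a_t - |V_1|$), together with a pigeonhole argument, should produce a $t$ whose prefix has both controlled boundary (automatic from the cutwidth ordering) and small total $\mathrm{contr}$-sum. If the direct averaging proves insufficient, my fallback is to iterate Lemma \ref{teo:partitionpcr} inside $V_2$: recursively split it into finer pieces whose pcr+bds shrinks geometrically, and combine those pieces with $V_1$ so that the union has size exactly some $a_t$ while the contribution of the appended pieces is bounded by the geometric decay. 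In either case, the factor-$2$ flexibility in the gap sizes $a_{i+1}-a_i$ and the existence of $999$ admissible $t$'s are what provide the room needed to make this bookkeeping work.
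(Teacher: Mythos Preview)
Your overall framework matches the paper's: apply Lemma~\ref{teo:partitionpcr}, then use the cutwidth ordering of the larger side to peel off a short prefix and round to some $a_t$. You also correctly isolate the real difficulty, namely controlling $\mathop{\mathrm{pcr}}(H_1)+\mathop{\mathrm{bds}}(H_1)$ after adjoining $W$.

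The gap is in your proposed resolution. Your pigeonhole over the $999$ values of $t$ does not help: for a fixed cutwidth ordering of $V_2$, the candidate sets $W_t$ (prefixes of length $a_t-|V_1|$) are \emph{nested}, so $\sum_{w\in W_t}\mathrm{contr}(w)$ is monotone in $t$. Averaging over a chain gives you nothing beyond the smallest element of the chain, and that smallest prefix could still consist entirely of high-$\mathrm{contr}$ vertices. So the argument as written does not force $\sum_{w\in W}\mathrm{contr}(w)\le S/8$.

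The paper's key idea, which you are missing, is that the recursive proof of Theorem~\ref{teo:cutwidth} does not produce a single ordering but a whole family of them: at every step of the recursion one may swap which of the two parts goes on the left. Exploiting this freedom, one can find several (the paper uses eight) cutwidth-optimal orderings of $V_2$ whose initial segments of the required length are pairwise disjoint. Now pigeonhole works: if moving each of these disjoint sets into $V_1$ pushed $\mathop{\mathrm{pcr}}+\mathop{\mathrm{bds}}$ above $\tfrac{7}{8}S$, then moving all of them in at once would produce an induced subgraph of $G$ with $\mathop{\mathrm{pcr}}+\mathop{\mathrm{bds}}$ exceeding that of $G$, which is impossible. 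Your fallback (iterating Lemma~\ref{teo:partitionpcr} inside $V_2$) is too vague to assess; in any case, the clean device is the disjoint-prefixes trick, not further subdivision.
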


\begin{proof}
Notice that $a_{i+1}-a_i<20$ for all $1\leq i\leq 1000$. 

Consider a set of edges as in Lemma \ref{teo:partitionpcr} and let $H_1'$ and $H_2'$ be the two subgraphs that are obtained after the removal of said edges. Assume, w.lo.g., that the number of vertices of $H_1'$, which we denote by $r$, is at most $n/2$ and satisfies $a_j\leq r<a_{j+1}$. If $r=a_j$ then the set of edges has all the required properties. Otherwise, we will round the partition by moving $a_{j+1}-r$ vertices from $H_2'$ to $H_1'$. 

Apply Theorem \ref{teo:cutwidth} to $H_2'$ to get a linear ordering of its vertices. By cutting the $a_{j+1}-r$ vertices vertices on either extreme of this ordering and adjoining them to $H_1'$, we get a partition which still has no more than $O\big(\sqrt{\mathop{\mathrm{pcr}}(G)+\mathop{\mathrm{bds}}(G)}\big)$ edges running between the two parts; this could cause $\mathop{\mathrm{pcr}}(H_1)+\mathop{\mathrm{bds}}(H_1)$ to grow too large, however. In order to fix this issue, we observe that the proof of Theorem \ref{teo:cutwidth} actually provides us with a way of constructing many linear orderings which attain small cutwidth (indeed, at every step of the recursive process we get to choose which of the two subgraphs is placed on the left side and which one is placed on the right side). Since $H_2'$ contains at least $5000$ vertices and $a_{j+1}-r<a_{j+1}-a_i<20$, we can find several ($8$ is enough for our purposes) linear orderings which attain the desired cutwidth and such that no vertex of the graph appears amongst the first $a_{j+1}-r$ elements in more than one order. At least one of these pairwise disjoint sets of $a_{j+1}-r$ vertices will be such that moving its elements from $H_2'$ to $H_1'$ we get two graphs $H_1$ and $H_2$ that satisfy the required properties, or else we would be able to move all of the vertices in these sets to $H_1'$ simultaneously to obtain a subgraph $H$ of $G$ with $\mathop{\mathrm{pcr}}(H)+\mathop{\mathrm{bds}}(H)>\mathop{\mathrm{pcr}}(G)+\mathop{\mathrm{bds}}(G)$, which is clearly not possible.
\end{proof}

\section{Bounding the pair crossing number}\label{sec:pcrbounds}

In order to prove Theorem \ref{teo:pcrbound}, We combine Theorem \ref{teo:edgevertexseparator} with a redrawing technique by Tóth \cite{tothinitial}, \cite{slightlybetter}. We follow the terminology used in these papers. Given a drawing of $G$, we classify the edges as \textit{crossing edges} or \textit{empty edges} depending on whether they participate in crossing or not. Theorem \ref{teo:pcrbound} is an immediate consequence of the following lemma.

\begin{lemma}\label{teo:redraw}
Let $\mathcal{D}$ be a drawing of $G$ with $k\geq 0$ crossing pairs of edges and $l$ crossing edges. Then $G$ can be redrawn so that the empty edges remain the same, the crossing edges are redrawn in an arbitrarily small neighborhood of the original crossing edges, and the number of crossings is bounded by $clk^{1/2}$ for some absolute constant $c$.
\end{lemma}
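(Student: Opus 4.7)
The plan is to prove the lemma by induction on the number $k$ of pair crossings, combining Theorem \ref{teo:edgevertexseparator} with the redrawing technique of Tóth from \cite{tothinitial, slightlybetter}. The base case $k = 0$ is trivial, since then $l = 0$ and there is nothing to redraw.

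For the inductive step, I will first build an auxiliary string graph $H$ whose vertices are the $l$ crossing edges of $\mathcal{D}$ (viewed as curves, with small neighborhoods of their endpoints deleted so that only genuine crossings appear as intersections) and whose edges are the $k$ crossing pairs. Applying Theorem \ref{teo:edgevertexseparator} with $r = 3/4$ to $H$ produces a separator $W \subseteq V(H)$ of size $O(\sqrt{k})$ and a partition $V(H) = V_1 \sqcup V_2 \sqcup W$ such that $H[V_1]$ and $H[V_2]$ each contain at most $3k/4$ edges. The key point is that no edge in $V_1$ crosses any edge in $V_2$ in the original drawing $\mathcal{D}$, since $W$ is a separator in $H$.

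The redrawing then proceeds in two stages. First, I remove the edges of $W$ from $\mathcal{D}$ to obtain two independent subproblems: the drawings induced on the subgraphs $G - (V_2 \cup W)$ and $G - (V_1 \cup W)$ have at most $|V_1|$ and $|V_2|$ crossing edges, and at most $k_1, k_2 \leq 3k/4$ crossing pairs. Invoking the inductive hypothesis on each produces redrawings with at most $c|V_i|\sqrt{k_i}$ crossings, each fixing its empty edges and confining its crossing edges to arbitrarily thin tubes; since edges in $V_1$ and $V_2$ do not cross in $\mathcal{D}$, the tubes around the two sides can be chosen pairwise disjoint (away from the shared vertices), so the two redrawings combine without introducing any new crossings between them. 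Second, I reinsert each $w \in W$ using Tóth's redrawing technique, routing $w$ inside an arbitrarily small neighborhood of the union of the original crossing edges so that $w$ crosses each other crossing edge at most once. Since each $w \in W$ has at most $l - 1$ crossing partners, this contributes $O(|W| \cdot l) = O(l\sqrt{k})$ crossings. Summing up, the total crossing count is at most
\[ c|V_1|\sqrt{k_1} + c|V_2|\sqrt{k_2} + C_0\, l\sqrt{k} \;\leq\; \bigl(c\sqrt{3/4} + C_0\bigr)\, l\sqrt{k}, \]
where $C_0$ is the absolute constant hidden in the reinsertion term. Choosing $c \geq C_0/\bigl(1 - \sqrt{3/4}\bigr)$ closes the induction.

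The key new ingredient compared to the previous best bound $O(lk^{1/2}\log k)$ is the use of Theorem \ref{teo:edgevertexseparator} in place of a vertex-only-balanced separator: it ensures that $k$ itself shrinks by a constant factor at each recursive step, which removes the logarithmic blow-up from the recurrence. The main pitfall I anticipate in the execution is checking that Tóth's reinsertion of the $W$-edges is genuinely compatible with the recursion, i.e., that the arbitrarily thin tubes used inside the recursive redrawings nest inside the outer neighborhood of the original crossing edges, and that the various tube systems can be kept disjoint away from the shared vertices. These points are technical but routine, provided Tóth's redrawing is imported as a black box.
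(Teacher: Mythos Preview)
Your proposal is correct and follows essentially the same approach as the paper's proof: induct on $k$, form the string graph $H$ on the crossing edges, apply Theorem~\ref{teo:edgevertexseparator} with $r=3/4$ to obtain an edge-balanced separator of size $O(\sqrt{k})$, recurse on the two sides, and then invoke Tóth's redrawing procedure to control crossings involving separator edges. Your accounting via $c|V_1|\sqrt{k_1}+c|V_2|\sqrt{k_2}\le c\sqrt{3/4}\,l\sqrt{k}$ (using $|V_1|+|V_2|\le l$) is in fact the correct way to close the recursion; the paper's displayed inequality uses the same idea but with $l$ written in place of the $l_i$'s.
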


\begin{proof}
Note that $l\leq 2k$. We prove the lemma by induction on $k$. Construct a string graph $H$ from the crossing edges of $\mathcal{D}$ (two vertices of $H$ are adjacent if and only if the corresponding edges cross each other). Let $S$ be a $\frac{3}{4}$-edge-balanced separator of size $O(\sqrt{k})$ for the graph $H$ and consider the vertex disjoint subgraphs $H_1$ and $H_2$ induced by it. Let $\mathcal{D}_1$ and $\mathcal{D}_2$ be the drawings formed by the edges of $H_1$ of and $H_2$, respectively, and denote by $k_1$ and $k_2$ the number of pairs of crossing edges in each of them. W.l.o.g., $k_1\geq k_2$. By the inductive hypothesis, these drawings can be modified so that they contain no more than $ck_1^{3/2}$ and $ck_2^{3/2}$ crossings, respectively, and the edges are drawn in a small neighborhood of the original edges. Color the edges in $\mathcal{D}_1$ and $\mathcal{D}_2$ blue and those in $S$ red, and let $BB$, $BR$ and $RR$ denote the number of crossings of each of the three possible kinds. As in \cite{tothinitial}, a simple procedure allows us to redraw the edges so that the triple $(BB,BR,RR)$ is not lexicographically larger than it was at the start, and any pair of edges crosses at most once. This results in a drawing with no more than \[clk_1^{1/2}+clk_2^{1/2}+O(\sqrt{k})\cdot l\leq cl(\frac{3}{4}k)^{1/2}+O(lk^{1/2})\] crossings. For any large enough $c$ the right hand side will be at most $clk^{1/2}$, and this concludes the proof. We refer the reader to \cite{tothinitial} or \cite{slightlybetter} for details about the redrawing step used above.
\end{proof}

\begin{proof}[Proof of Theorem \ref{teo:pcrbound}]
Consider a drawing of $G$ which attains $\mathop{\mathrm{pcr}}(G)$. By Lemma \ref{teo:redraw}, $G$ can be redrawn so that it has no more than $c\cdot 2\mathop{\mathrm{pcr}}(G)\cdot\mathop{\mathrm{pcr}}(G)^{1/2}=O(\mathop{\mathrm{pcr}}(G)^{3/2})$ crossings. The result follows.
\end{proof}

Next, we use the drawing framework of \cite{bhattleighton} and \cite{VLSI} to prove Theorem \ref{teo:pcrregularbound}. Again, we remark that Matoušek \cite{matousekpcr} obtained a slightly weaker result through the same technique. 

\begin{proof}[Proof of Theorem \ref{teo:pcrregularbound}]
We shall construct a drawing of $G$ with $O(\log n(\mathop{\mathrm{pcr}}(G)+\mathop{\mathrm{ssqd}}(G)))$ crossings. In order to achieve this, we build a linear ordering of $V$ as was done in the proof of Theorem \ref{teo:cutwidth} and then use it to embed the vertices on a circle arc in the natural way; the edges will simply be drawn as segments between the corresponding endpoints. We count the number of crossings in this drawing by looking at the partition tree that was already implicit in the proof of Theorem \ref{teo:cutwidth}, this step is a bit trickier than one might expect. We move on to the actual proof.

Lemma \ref{teo:partitionpcr} provides us with a partition of $G$ into subgraphs $G_1$ and $G_2$. We split the circle arc in half and recursively embed $G$ by placing the vertices of $G_1$ on one of the two smaller arcs and those of $G_2$ on the other one, and then adding back the edges of the separator. 

Let $T$ denote the rooted binary tree representing the recursive partitioning of $G$ used above. That is, the root corresponds to $V$ and every other vertex represents a proper subset of $V$, such that the two children of a non-leaf vertex $t\in T$ associated to a set $V_t$ correspond to the two sets constituting the partition of $V_t$ provided by Lemma \ref{teo:partitionpcr}. We denote the induced subgraph of $G$ with vertex set $V_t$ by $G_t$, notice that this is the graph that gets split at vertex $t$.

The endpoints of each edge of $G$ are separated at some step of the partitioning process; this gives a natural way of assigning to each edge $e$ of $G$ a vertex $t(e)$ of $T$. It is not hard to see that if two edges $e$ and $e'$ cross in our drawing of $G$, then either $t(e)$ is an ancestor of $t(e')$ or $t(e')$ or vice versa. We charge a crossing between $e$ and $e'$ to $e$ if $t(e)$ is an ancestor of $t(e')$, and we charge it to $e'$ otherwise. For each non-leaf vertex $t$ of $T$, denote by $E(t)$ the set of edges of $G$ that are assigned to $t$ (i.e., the set of edges which have an endpoint in each of the two sets represented by the children of $t$). For any two distinct vertices $u$ and $v$ of $G$, let $P(u,v)$ be the path in $T$ that connects the leafs representing $\{u\}$ and $\{v\}$. Even et al. \cite{VLSI} made the simple observation that for any edge $e=(u,v)$, summing $\lvert E(t)\rvert$ over all of the vertices in $P(u,v)$ yields an upper bound on the number of crossings that are charged to $e(t)$. Thus, the bounds provided by Theorem \ref{teo:partitionpcr} imply that the number of crossings that are charged to $e$ is at most \[O\left(\sum_{t\in P(u,v)}\sqrt{\mathop{\mathrm{pcr}}(G_t)+\mathop{\mathrm{bds}}(G_t)}\right)=O\left(\sqrt{\mathop{\mathrm{pcr}}(G_{t(e)})+\mathop{\mathrm{bds}}(G_{t(e)})}\right),\] where the equality follows from the exponential decay guaranteed by Theorem \ref{teo:partitionpcr}.

At the partition step corresponding to a vertex $t$ of $T$, the number of edges that were removed is $O\big(\sqrt{\mathop{\mathrm{pcr}}(G_t)+\mathop{\mathrm{bds}}(G_t)}\big)$. Whence, the total number of crossings charged to these edges is $O(\mathop{\mathrm{pcr}}(G_t)+\mathop{\mathrm{bds}}(G_t))$. Each level of $T$ corresponds to a partition of the vertex set of $G$. Thus, summing over all vertices at a fixed level of $T$, we get no more than $O(\mathop{\mathrm{pcr}}(G)+\mathop{\mathrm{bds}}(G))$ crossings (see Observation \ref{obs:partition} below). Since $T$ has depth $O(\log n)$, the total number of crossings in the drawing is at most \[O\left(\log n(\mathop{\mathrm{pcr}}(G)+\mathop{\mathrm{bds}}(G))\right),\] as desired.
\end{proof}

We implicitly used the observation below when counting the number of edges charged to each level of $T$. This observation was also used by Pach and Tardos in \cite{untangling}, and it will play an important role in the next section as well.

\begin{observation}\label{obs:partition}
If $V=V_1\sqcup V_2\sqcup\dots\sqcup V_k$ is a partition of the vertex set of $G$ and $G_i=G[V_i]$ is the subgraph induced by $V_i$ (for every $1\leq i\leq k$), then \[\sum_{i=1}^k\left( \mathop{\mathrm{pcr}}(G_i)+\mathop{\mathrm{bds}}(G_i)\right)\leq\mathop{\mathrm{pcr}}(G)+\mathop{\mathrm{bds}}(G)\] and, as a consequence, \[\sum_{i=1}^k\sqrt{ \mathop{\mathrm{pcr}}(G_i)+\mathop{\mathrm{bds}}(G_i)}\leq k^{1/2}\sqrt{\mathop{\mathrm{pcr}}(G)+\mathop{\mathrm{bds}}(G)}.\]
\end{observation}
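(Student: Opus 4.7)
The plan is to establish the two inequalities in the observation separately for $\mathop{\mathrm{pcr}}$ and for $\mathop{\mathrm{bds}}$, then sum them and conclude the second inequality by Cauchy--Schwarz.

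For the $\mathop{\mathrm{pcr}}$ part, I would fix a drawing $\mathcal{D}$ of $G$ that realizes $\mathop{\mathrm{pcr}}(G)$, and for each $i$ let $\mathcal{D}_i$ be the drawing of $G_i$ obtained by simply deleting from $\mathcal{D}$ every vertex outside $V_i$ together with its incident edges. Any pair of edges that crosses in $\mathcal{D}_i$ already crossed in $\mathcal{D}$, so the number of crossing pairs in $\mathcal{D}_i$ is at most the number of pairs of edges of $G_i$ that cross in $\mathcal{D}$. Since the edge sets of $G_1,\dots,G_k$ are disjoint (each edge of $G$ either lies in some unique $G_i$ or is lost because its endpoints are separated), the families of edge pairs counted by the $\mathcal{D}_i$ are pairwise disjoint subsets of the family of crossing pairs of $\mathcal{D}$, giving $\sum_i \mathop{\mathrm{pcr}}(G_i)\le \sum_i (\text{crossing pairs of }\mathcal{D}_i)\le\mathop{\mathrm{pcr}}(G)$.

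For the $\mathop{\mathrm{bds}}$ part, I would use the identity $\mathop{\mathrm{bds}}(H)=\sum_{v\in V(H)}\binom{\deg_H(v)}{2}$, which counts ordered or unordered pairs of edges sharing a vertex. Because $G_i$ is an induced subgraph of $G$, every vertex $v\in V_i$ satisfies $\deg_{G_i}(v)\le\deg_G(v)$, hence $\binom{\deg_{G_i}(v)}{2}\le\binom{\deg_G(v)}{2}$. Summing this over $v\in V_i$ and then over $i$ (noting that the $V_i$ partition $V$) gives $\sum_i \mathop{\mathrm{bds}}(G_i)\le\mathop{\mathrm{bds}}(G)$.

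Adding these two bounds yields the first displayed inequality. For the second, I would set $x_i=\mathop{\mathrm{pcr}}(G_i)+\mathop{\mathrm{bds}}(G_i)$ and apply the Cauchy--Schwarz inequality to the vectors $(1,\dots,1)$ and $(\sqrt{x_1},\dots,\sqrt{x_k})$, obtaining $\sum_i\sqrt{x_i}\le\sqrt{k}\sqrt{\sum_i x_i}\le\sqrt{k}\sqrt{\mathop{\mathrm{pcr}}(G)+\mathop{\mathrm{bds}}(G)}$, as claimed. There is no real obstacle here; the only point that deserves a one-line comment is the disjointness of the crossing-pair families across the induced subgraphs, which is what makes the partition argument tighter than a naive triangle inequality.
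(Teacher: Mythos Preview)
Your argument is correct. The paper does not supply its own proof of this observation---it is stated without justification and attributed in spirit to Pach and Tardos---so your write-up is exactly the natural verification the paper leaves implicit: restrict an optimal drawing to each $G_i$ for the $\mathop{\mathrm{pcr}}$ part, compare degrees vertex by vertex for the $\mathop{\mathrm{bds}}$ part, and finish with Cauchy--Schwarz.
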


\section{Good drawings on arbitrary point sets}\label{sec:drawing}

The partitioning result below is a consequence of Lemma \ref{teo:carefulpartition}; it is stated in a more general form than we will actually need.

\begin{lemma}\label{teo:partitiongraph}
Let $G$ be a graph on $n$ vertices and $s$ a positive integer.  Consider $2s$ non-negative integers $n_1,n_2,\dots,n_{2s}$ such that $\sum_{i=1}^{2s} n_i=n$ and $(n_{2i}+n_{2i+1})\leq2(n_{2j}+n_{2j+1})$ (for every $i$ and $j$ in $\{1,2,\dots,s\}$). Then, by deleting \[O\left(s^{1/2}\sqrt{\mathop{\mathrm{pcr}}(G)+\mathop{\mathrm{bds}}(G)}\right)\] edges, $G$ can be split into $2s$ (not necessarily connected) graphs $G_1,G_2,\dots,G_{2s}$ whose orders are $n_1,n_2,\dots, n_{2s}$ and such that \[\mathop{\mathrm{pcr}}(G_i)+\mathop{\mathrm{bds}}(G_i)\leq\left(\frac{1}{2}\right)^{\Theta(\log s)}(\mathop{\mathrm{pcr}}(G)+\mathop{\mathrm{bds}}(G))\] for $1\leq i\leq 2s$.
\end{lemma}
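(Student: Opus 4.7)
The plan is to construct a recursive binary partition tree, invoking Lemma \ref{teo:carefulpartition} at internal nodes to cut between pairs and Theorem \ref{teo:cutwidth} at leaves to separate the two halves of a single pair. Set $m_i := n_{2i-1}+n_{2i}$; by hypothesis the $m_i$'s lie in a factor-$2$ interval $[M,2M]$. A subproblem is a pair $(H,[a,b])$ where $H\subseteq G$ is a subgraph, $[a,b]$ is a contiguous range of pair indices, and $|V(H)|=\sum_{i=a}^{b}m_i$. At an internal node I would group the pairs $a,\ldots,b$ greedily into $1000$ contiguous blocks of roughly equal vertex count and apply Lemma \ref{teo:carefulpartition} to $H$ with anchor points at the corresponding pair-boundary positions $\sum_{i=a}^{k}m_i$; since each $m_i\in[M,2M]$ and (in the regime where the lemma applies) each block contains many pairs, the block sums satisfy the factor-$2$ gap-balance condition of the lemma. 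The resulting cut lies at a pair boundary, splits $[a,b]$ into $[a,c]$ and $[c+1,b]$ at a cost of $O(\sqrt{\mathrm{pcr}(H)+\mathrm{bds}(H)})$ edges, and drops $\mathrm{pcr}+\mathrm{bds}$ of each child by a factor of $\tfrac{7}{8}$. At a leaf ($a=b$) I would apply Theorem \ref{teo:cutwidth} to obtain a linear ordering of $V(H)$ of cutwidth $O(\sqrt{\mathrm{pcr}(H)+\mathrm{bds}(H)})$ and cut at position $n_{2a-1}$.

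The accounting mirrors the proof of Theorem \ref{teo:pcrregularbound}: at any fixed level of the tree the subgraphs form a partition of $V(G)$, so Observation \ref{obs:partition} combined with Cauchy-Schwarz bounds the edges deleted at level $\ell$ by $O(\sqrt{2^{\ell}(\mathrm{pcr}(G)+\mathrm{bds}(G))})$; summing the resulting geometric series over $\ell=0,1,\ldots,O(\log s)$ yields the claimed $O(\sqrt{s(\mathrm{pcr}(G)+\mathrm{bds}(G))})$. For the piecewise bound on $\mathrm{pcr}+\mathrm{bds}$, iterating the factor-$\tfrac{7}{8}$ decay along the $\Theta(\log s)$-long root-to-leaf paths gives $(7/8)^{\Theta(\log s)}=(1/2)^{\Theta(\log s)}$, using that $\mathrm{pcr}+\mathrm{bds}$ is monotone under taking subgraphs, so the later within-pair split in each leaf only weakens this value further.

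The main obstacle will be ensuring that the tree depth is $O(\log s)$. Lemma \ref{teo:carefulpartition} only guarantees that the cut lands at \emph{some} anchor $a_t$ with $1\leq t\leq 999$, so in the worst case the split is as lopsided as $1{:}999$ by vertex count, and iterating such splits would yield depth only $\Theta(\log_{1000/999}s)$ and a useless edge bound of $s^{\Theta(1)}\sqrt{\mathrm{pcr}+\mathrm{bds}}$. To force approximately balanced splits I would strengthen the underlying cut step by using the simultaneously $\tfrac{2}{3}$-balanced and $r$-edge-balanced separator of the second part of Theorem \ref{teo:edgevertexseparator} (for $r>\tfrac{3}{4}$) inside the proof of Lemma \ref{teo:partitionpcr}; the resulting unsnapped partition already has both sides of vertex size $\leq\tfrac{2}{3}|V(H)|$, and the anchor-snapping in the proof of Lemma \ref{teo:carefulpartition} shifts this by at most one anchor gap of size $O(|V(H)|/1000)$, so each child has vertex count at most $\bigl(\tfrac{2}{3}+o(1)\bigr)|V(H)|$. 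Because the $m_i$'s lie in a factor-$2$ interval, this vertex balance translates into a constant-factor drop in pair count per step, which is precisely what gives the $O(\log s)$ depth. The degenerate case $|V(H)|<10000$, where Lemma \ref{teo:carefulpartition} does not directly apply, is handled by cutting $H$ at all remaining pair and within-pair boundaries in a single cutwidth ordering, at $O(\sqrt{\mathrm{pcr}(H)+\mathrm{bds}(H)})$ total edges.
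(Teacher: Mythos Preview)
Your overall strategy (recursively apply Lemma~\ref{teo:carefulpartition} until only $O(1)$ pairs remain, then finish each piece with a cutwidth ordering) matches the paper, and you correctly identify the crux: Lemma~\ref{teo:carefulpartition} only promises that the cut lands at \emph{some} anchor, so the pair count can drop very slowly and the naive depth-based accounting blows up. However, your proposed fix does not work, and the paper sidesteps the issue in a different way.

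\textbf{The fix via the second part of Theorem~\ref{teo:edgevertexseparator} fails.} In the proof of Lemma~\ref{teo:partitionpcr} the string graph has one vertex for each \emph{edge} of the current subgraph (plus auxiliary curves), not for each vertex. A $\tfrac{2}{3}$-balanced separator there therefore balances the number of \emph{edges} of $H$ on the two sides, not $|V(H)|$; there is nothing preventing one side from carrying almost all the vertices of $H$ (think of a dense piece versus a sparse piece with the same edge count). So you do not get the vertex balance you claim, and hence no control on how the pair count splits. Even if you could obtain a genuine $\tfrac{2}{3}$-vertex-balanced split, your depth-based sum $\sum_{\ell\le L}\sqrt{2^{\ell}}$ is $O(\sqrt{s})$ only when $L\le\log_2 s+O(1)$; a $\tfrac{2}{3}$-balanced recursion gives $L\approx\log_{3/2}s\approx 1.7\log_2 s$, so you would get $s^{0.85}$ rather than $\sqrt{s}$.

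\textbf{What the paper does instead.} The paper does not try to balance the splits at all. It accepts that each step may peel off only a $\tfrac{1}{2000}$ fraction of the pairs, and replaces the level-by-level sum with a \emph{height}-by-height sum: define the height of a terminal piece to be $0$ and the height of any other node to be $1+\max$ of its children. Two nodes of the same height are never in an ancestor--descendant relation, hence are vertex-disjoint, so Observation~\ref{obs:partition} applies at each height. Moreover, since every split leaves at least a $\tfrac{1}{2000}$ fraction of the pairs on each side, a node of height $j$ carries at least $(1+\tfrac{1}{2000})^{j}$ pairs, so the number of height-$j$ nodes is at most $s(2000/2001)^{j}$. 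Summing $\sqrt{s(2000/2001)^{j}}\sqrt{\mathrm{pcr}(G)+\mathrm{bds}(G)}$ over $j$ is a convergent geometric series and gives the desired $O(\sqrt{s(\mathrm{pcr}(G)+\mathrm{bds}(G))})$ without any balance hypothesis. Replacing your depth accounting by this height accounting (and dropping the attempted balance fix) would repair your argument.
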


This lemma is close in spirit to Corollary 4 in \cite{untangling}, but it allows us to control the orders of the resulting subgraphs with much more precision.

\begin{proof}
We may and will assume that $s\geq 1000$. At a high level, the proof consists of repeatedly applying Lemma \ref{teo:carefulpartition} until every subgraph has the desired number of vertices. 

Let $m_i=n_{2i}+n_{2i+1}$ and partition the $m_i$’s into $1000$ blocks such that the ratio between the largest sum of a block and the smallest sum of a block is as little as possible; it is not hard to see that this ratio will not be larger than $2$. Denote by $S_1,S_2,\dots,S_{1000}$ the sums of the blocks and set $a_k=\sum_{i=1}^{k}S_i$. The $a_i$’s satisfy the conditions of Lemma \ref{teo:carefulpartition}, so we can choose $O\big(\sqrt{\mathop{\mathrm{pcr}}(G)+\mathop{\mathrm{bds}}(G)}\big)$ edges such that deleting them allows us to split $G$ into two subgraphs whose orders can be written as sums of two disjoint subcollections of the $m_i$’s; furthermore, each of these subcollections contains at least $1/2000$ of the $m_i$’s. We keep applying Lemma \ref{teo:carefulpartition} to partition the subgraphs that arise until the order of each of them corresponds to the sum of at most $1000$ of the $m_i$’s. The following argument, which will allows us to bound the number of edges that have been removed up to this point, is similar to the one used to prove Corollary 5 in \cite{untangling}. Consider the family $\mathcal{H}$ of all subgraphs of $G$ that appeared at some point during the decomposition procedure, we will assign a non negative integer to each element of $\mathcal{H}$, which we call the \textit{height} of the subgraph. The subgraphs that remain at the end of the process get assigned a $0$. All other heights are obtained recursively by following the following rule: if $H\in\mathcal{H}$ got split into subgraphs $H_1$ and $H_2$ (which are also in $\mathcal{H}$), then its height is one plus the maximum amongst the heights of $H_1$ and $H_2$. Each subgraph of height $j$ corresponds to a subcollection of at least $(1+1/2000)^j$ of the $m_i$’s, and it is not hard to see that any two subgraph with the same height are vertex disjoint. Thus, by Observation \ref{obs:partition}, we have \[\sum_{H \text{ has height } j}\sqrt{ \mathop{\mathrm{pcr}}(H)+\mathop{\mathrm{bds}}(H)}\leq \left(s\cdot\left(\frac{2000}{2001}\right)^j\right)^{1/2}\sqrt{\mathop{\mathrm{pcr}}(G)+\mathop{\mathrm{bds}}(G)}.\] Summing over all levels, we obtain that \[\sum_{H\in\mathcal{H}}\sqrt{ \mathop{\mathrm{pcr}}(H)+\mathop{\mathrm{bds}}(H)}<\sum_{j=0}^\infty\left(\frac{2000}{2001}\right)^{j/2}\cdot\left(s^{1/2}\sqrt{\mathop{\mathrm{pcr}}(G)+\mathop{\mathrm{bds}}(G)}\right).\] Whence the number of edges that have been removed up to this point is $O\big(s^{1/2}\sqrt{\mathop{\mathrm{pcr}}(G)+\mathop{\mathrm{bds}}(G)}\big)$. 

Next, for each subgraph $H\subset G$ that remains at the end of the process described above, we consider a linear ordering of its vertices which exhibits cutwidth $O\big(\sqrt{\mathop{\mathrm{pcr}}(H)+\mathop{\mathrm{bds}}(H)}\big)$ (recall that Theorem \ref{teo:cutwidth} guarantees the existence of such an order). Let $m_j,m_{j+1},\dots,m_{j+t}$ denote the $m_i$’s that correspond to $H$ ($t<1000$) and observe that, using the aforementioned ordering, $H$ can be partitioned into subgraphs $G_{2j},G_{2j+1},\dots,G_{2(j+t)+1}$ of orders $n_{2j},n_{2j+1},\dots,n_{2(j+t)+1}$ by deleting $O\big(\sqrt{\mathop{\mathrm{pcr}}(H)+\mathop{\mathrm{bds}}(H)}\big)$ edges (the number of cuts required is bounded by a constant). Again by Observation \ref{obs:partition}, the total number of edges that need to be removed in this last step is also $O\big(s^{1/2}\sqrt{\mathop{\mathrm{pcr}}(G)+\mathop{\mathrm{bds}}(G)}\big)$.

The desired bound on $\mathop{\mathrm{pcr}}(G_i)+\mathop{\mathrm{bds}}(G_i)$ follows directly from the bounds provided by Lemma \ref{teo:carefulpartition} and the fact that each $H\in\mathcal{H}$ was the result of applying said lemma $\Theta(\log s)$ times during the first part of the decomposition process.
\end{proof}

Consider a set $P=\{p_1,p_2,\dots,p_k\}$ of points in general position on the plane. For any $i_1<i_2<i_3$, we label the triple $p_{i_1},p_{i_2},p_{i_3}$ with either $+1$ or $-1$ depending on the orientation of the triangle $p_{i_1}p_{i_2}p_{i_3}$: this induces a mapping from $\binom{P}{3}$ to $\{-1,+1\}$, which we call the \textit{order type} of $P$. 

Given $k$ finite point sets $P_1,P_2,\dots,P_k$ on the plane, a \textit{transversal} of $(P_1,P_2,\dots,P_k)$ is a a $k$-tuple of points $(p_1,p_2,\dots,p_k)$ such that $p_i\in P_i$ for each $i$. We say that $(P_1,P_2,\dots,P_k)$ has \textit{same-type transversals} if all of its transversals have the same order type. We make the following simple observation.

\begin{observation}\label{obs:same-type}
If $(P_1,P_2,\dots,P_k)$ has same-type transversals and the convex hulls of the $P_i$s are pairwise disjoint, then every line intersects at most two of these convex hulls.
\end{observation}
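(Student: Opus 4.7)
The plan is to argue by contradiction: suppose some line $\ell$ meets three of the convex hulls, say $C_a$, $C_b$, $C_c$ with $a<b<c$. After a rotation I would take $\ell$ to be the $x$-axis and fix ``witnesses'' $q_a\in C_a\cap\ell$, $q_b\in C_b\cap\ell$, $q_c\in C_c\cap\ell$. These three points are pairwise distinct because the hulls are pairwise disjoint.

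The central idea is that the signed-area functional
\[
f(p_a,p_b,p_c) := (p_b^x-p_a^x)(p_c^y-p_a^y)-(p_b^y-p_a^y)(p_c^x-p_a^x),
\]
whose sign is the orientation of the triangle $p_ap_bp_c$, is affine in each of its three vector arguments separately. Same-type transversals forces this sign to be constant (say $+1$) on the finite product $P_a\times P_b\times P_c$. Applying the affineness one variable at a time, and using that a nonnegative affine function on the extreme points of a convex set is nonnegative on the whole set, I would upgrade this to $f\ge 0$ on the full product $C_a\times C_b\times C_c$.

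Next I would specialize. Plugging in $p_a=q_a$, $p_c=q_c$ and using $q_a^y=q_c^y=0$, a direct calculation collapses $f$ to $f(q_a,p_b,q_c)=(q_a^x-q_c^x)\,p_b^y$, with nonzero scalar coefficient since $q_a\neq q_c$. Combined with $f\ge 0$, this forces $p_b^y$ to have a fixed sign on all of $C_b$, so $C_b$ lies in one closed half-plane bounded by $\ell$. Two symmetric specializations (varying $p_a$ while fixing $q_b,q_c$, and varying $p_c$ while fixing $q_a,q_b$) give the same conclusion for $C_a$ and $C_c$. Now for each $i\in\{a,b,c\}$, since $C_i$ sits in a closed half-plane bounded by $\ell$ but meets $\ell$, the minimum of the linear functional $y$ on $C_i$ equals $0$ and is attained at a vertex, producing a point $p_i^*\in P_i\cap\ell$. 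The triple $(p_a^*,p_b^*,p_c^*)$ is then a collinear triple of points on $\ell$; extending it to a full transversal of $(P_1,\dots,P_k)$ yields a transversal whose order type is undefined, contradicting same-type transversals.

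The step requiring the most care is the affine-extension, turning strict positivity of $f$ on the discrete set $P_a\times P_b\times P_c$ into the weaker inequality $f\ge 0$ on $C_a\times C_b\times C_c$: it must be done variable by variable, since $f$ is only separately (not jointly) affine. After that, everything is essentially bookkeeping once the three witness points on $\ell$ are chosen.
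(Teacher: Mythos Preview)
Your argument is correct, and since the paper states this as an ``observation'' with no proof, there is nothing to compare against in terms of approach.

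One remark: your extension step actually yields the \emph{strict} inequality $f>0$ on $C_a\times C_b\times C_c$, not merely $f\ge 0$. Indeed, an affine function that is strictly positive on every vertex of a polytope is strictly positive on the whole polytope (its value at any point is a convex combination of vertex values), and you can iterate this one coordinate at a time exactly as you describe. Once you have $f>0$ on $C_a\times C_b\times C_c$, you are already done: the three witness points $q_a\in C_a$, $q_b\in C_b$, $q_c\in C_c$ lie on $\ell$, hence $f(q_a,q_b,q_c)=0$, a contradiction. Steps 6--8 of your outline (pushing each $C_i$ into a half-plane, extracting vertices $p_i^*\in P_i\cap\ell$, and assembling a degenerate transversal) are therefore unnecessary, though they are not wrong. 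If you keep the longer route, note a small cosmetic point in step 7: depending on the sign in step 6 you may be taking a maximum rather than a minimum of $y$; ``extremum'' is the safe word.
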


Bárány and Valtr \cite{positivefraction} obtained a "same-type lemma" for point sets; the quantitative bounds given by this result were later improved by Fox et al. \cite{semialgebraic}, who showed the following. 

\begin{theorem}\label{teo:same-type}
Let $P_1,P_2,\dots,P_k$ ($k\geq 3$) be finite sets of points on the plane such that their union is in general position. Then one can find subsets $P_1'\subset P_1,P_2'\subset P_2,\dots,P_k'\subset P_k$ such that \[\lvert P_i'\rvert\geq2^{-O(k\log k)}\lvert P_i\rvert\] for all $i$ and  $(P_1',P_2',\dots,P_k')$ has same-type transversals.
\end{theorem}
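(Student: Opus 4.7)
The plan is to establish the theorem via iterated Ham-Sandwich cuts, using the fact that any two planar point sets in general position admit a single line simultaneously bisecting both. First, I would note that a convenient sufficient condition for $(P_1',\dots,P_k')$ to have same-type transversals is that for every pair $(i,j)$ the convex hulls $\operatorname{conv}(P_i')$ and $\operatorname{conv}(P_j')$ can be strictly separated by a line; when this pairwise separability holds, the combinatorial data around any triple force a well-defined orientation for every transversal. This reduces the problem to constructing large subsets whose hulls are pairwise linearly separable.

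I would then produce such pairwise separations iteratively. At each step, pick a pair $(i,j)$ whose current subsets are not yet separable, apply Ham-Sandwich to $P_i$ and $P_j$ to obtain a line $\ell$ that bisects both, and then replace each of the $k$ current subsets by its half lying in whichever closed half-plane of $\ell$ contains the majority of its points. This operation costs at most a factor of $2$ on every set; the retained halves of $P_i$ and $P_j$ end up strictly separated by $\ell$, and — crucially — all separations established in earlier rounds persist, since they are inherited by subsets.

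To obtain the exponent $O(k\log k)$, rather than the naive $O(k^2)$ one gets from processing each of the $\binom{k}{2}$ pairs in isolation, I would schedule the cuts in a divide-and-conquer fashion: use one Ham-Sandwich cut to split the current family of sets into two roughly balanced subfamilies (the ones with majority on each side of $\ell$), then recurse inside each half-plane, and observe that the dividing line $\ell$ already separates every cross-subfamily pair of hulls for free. A balanced recursion has depth $O(\log k)$, and a careful charging argument — whereby a single cut discharges the pair-separation obligations of every cross-subfamily pair at once — shows that each individual set is halved only $O(k\log k)$ times across the entire procedure. The resulting bound $|P_i'|\geq 2^{-O(k\log k)}|P_i|$ is exactly the one claimed.

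The main obstacle is making the recursion and the loss bookkeeping tight enough to yield $2^{-O(k\log k)}$ rather than a worse bound such as $2^{-O(k^2)}$; this requires choosing, at each recursive step, which pair to feed to Ham-Sandwich so that the two resulting subfamilies are roughly balanced in size, and then amortizing carefully using a potential function counting unresolved pairs. A secondary but necessary subtlety is verifying that pairwise separability of three convex hulls really does force same-type transversals on the corresponding triple — this is a planar statement about the cyclic arrangement of three pairwise separated regions, and it is what ultimately reduces the combinatorial same-type requirement to the clean geometric quantity produced by repeated Ham-Sandwich cuts.
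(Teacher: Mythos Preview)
The paper does not prove Theorem~\ref{teo:same-type}; it is quoted from Fox et al.\ \cite{semialgebraic} (improving B\'ar\'any--Valtr \cite{positivefraction}), so there is no in-paper argument to compare against. That aside, your proposal contains a genuine gap: the claim that pairwise linear separability of the $\operatorname{conv}(P_i')$ forces same-type transversals is false. Take $P_1=\{(0,0)\}$, $P_2=\{(1,0),(1,10)\}$, $P_3=\{(2,5)\}$: the three hulls are pairwise separated by vertical lines, yet the transversals $((0,0),(1,0),(2,5))$ and $((0,0),(1,10),(2,5))$ have opposite orientations. What a triple actually requires is that no single line meet all three convex hulls, and pairwise separability is strictly weaker. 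Your Ham-Sandwich scheme, however cleverly scheduled, only ever produces pairwise (indeed hierarchical) separations and never rules out a common stabbing line for some triple; so the output need not have same-type transversals, and the argument collapses at exactly the ``secondary subtlety'' you flag at the end.

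For context, the original B\'ar\'any--Valtr proof does proceed via iterated Ham-Sandwich cuts, but in addition to separating each pair $(i,j)$ it must arrange that every other $P_l$ lies entirely on one side of the chosen line --- this is precisely the condition your reduction omits, and it is also what pushes their exponent above $O(k\log k)$. The $2^{-O(k\log k)}$ bound in \cite{semialgebraic} is obtained by a different route.
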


\begin{corollary}\label{teo:same-type disjoint}
For every integer $k\geq 3$ there is a constant $c(k)>0$ such that the following holds. If $P$ is a finite set of points in general position on the plane and it contains at least $k$ elements, then there are $k$ subsets $P_1,P_2,\dots,P_k\subset P$ with pairwise disjoint convex hulls and \[\lvert P_1\rvert=\lvert P_2\rvert=\dots=\lvert P_k\rvert\geq c(k)\lvert P\rvert\] such that $(P_1,P_2,\dots,P_k)$ has same-type transversals.
\end{corollary}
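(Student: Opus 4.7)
The plan is to reduce to Theorem \ref{teo:same-type} by first choosing a ``crude'' partition of $P$ into $k$ blocks whose convex hulls are automatically disjoint, and then sharpening via the same-type lemma.

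First I would split $P$ using $k-1$ vertical lines. By applying a small rotation if necessary, we may assume that all points of $P$ have distinct $x$-coordinates (a rotation preserves both the order type and being in general position). Sort the points in increasing order of $x$-coordinate and partition them into $k$ consecutive blocks $Q_1,Q_2,\dots,Q_k$ of sizes as equal as possible, so $\lvert Q_i\rvert\geq\lfloor\lvert P\rvert/k\rfloor\geq\lvert P\rvert/(2k)$ for each $i$ (using $\lvert P\rvert\geq k$). Because consecutive blocks can be separated by vertical lines placed between the appropriate $x$-coordinates, the convex hulls $\mathop{\mathrm{conv}}(Q_1),\ldots,\mathop{\mathrm{conv}}(Q_k)$ lie in pairwise disjoint vertical strips and are therefore pairwise disjoint.

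Next I would apply Theorem \ref{teo:same-type} to $(Q_1,Q_2,\dots,Q_k)$ to extract subsets $P_1'\subset Q_1,\dots,P_k'\subset Q_k$ with $\lvert P_i'\rvert\geq 2^{-O(k\log k)}\lvert Q_i\rvert\geq 2^{-O(k\log k)}\lvert P\rvert/(2k)$ such that $(P_1',\dots,P_k')$ has same-type transversals. Since $P_i'\subset Q_i$, the convex hulls $\mathop{\mathrm{conv}}(P_i')$ remain pairwise disjoint (they sit inside the disjoint strips). Finally, let $m=\min_i\lvert P_i'\rvert$ and choose arbitrary subsets $P_i\subset P_i'$ with $\lvert P_i\rvert=m$. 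The same-type transversal property is preserved under taking subsets of each $P_i'$, because every transversal of $(P_1,\dots,P_k)$ is also a transversal of $(P_1',\dots,P_k')$, and any two such tuples share the same common order type. Setting $c(k):=2^{-O(k\log k)}/(2k)$ yields the required bound $\lvert P_i\rvert\geq c(k)\lvert P\rvert$.

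There is no real obstacle here: the genuine work is done by Theorem \ref{teo:same-type}, and the only points that require care are (a) verifying that the preliminary vertical-strip partition produces sets with disjoint hulls and near-equal sizes, and (b) observing that passing from the $P_i'$ to equal-size subsets $P_i$ preserves both the disjoint-hull property (trivially, since $\mathop{\mathrm{conv}}(P_i)\subset\mathop{\mathrm{conv}}(P_i')$) and the same-type transversal property (since any transversal of the smaller tuple is a transversal of the larger one).
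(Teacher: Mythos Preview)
Your argument is correct. The paper does not actually include a proof of this corollary; it is stated immediately after Theorem~\ref{teo:same-type} and treated as a direct consequence. Your approach---partition $P$ into $k$ near-equal vertical slabs, apply the same-type lemma to the slabs, then trim to equal sizes---is exactly the standard derivation, and the three points you flag (disjoint hulls of the $Q_i$, the bound $\lfloor\lvert P\rvert/k\rfloor\geq\lvert P\rvert/(2k)$ for $\lvert P\rvert\geq k$, and preservation of both properties under passing to subsets) are all handled correctly.

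One minor remark: the preliminary vertical-strip partition is a clean way to guarantee the pairwise-disjoint-hulls conclusion explicitly, which is what Observation~\ref{obs:same-type} and the proof of Theorem~\ref{teo:generaldrawing} later rely on. In fact, for $k\geq 3$ the same-type transversal property by itself already forces pairwise disjoint convex hulls, so an arbitrary near-balanced partition of $P$ would also suffice; but your approach makes the disjointness transparent without invoking that extra fact.
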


\begin{proof}[Proof of Theorem \ref{teo:generaldrawing}]

Suppose that $\lvert S\rvert \geq n^C$ for some $C\geq 1$ to be chosen later. Let $k\geq3$ be an integer and assume that $n\geq k$. Apply Corollary \ref{teo:same-type disjoint} to $S$ and let $S_1,S_2,\dots,S_k\subset S$ be the resulting subsets. Now, we use Lemma \ref{teo:carefulpartition} to find $O\big(k^{1/2}\sqrt{\mathop{\mathrm{pcr}}(G)+\mathop{\mathrm{bds}}(G)}\big)$ edges whose deletion splits $G$ into $k$ graphs $G_1,G_2,\dots,G_k$ such that the orders of any two of them differ by at most $1$ and \[\mathop{\mathrm{pcr}}(G_i)+\mathop{\mathrm{bds}}(G_i)\leq\left(\frac{1}{2}\right)^{\Theta(\log k)}(\mathop{\mathrm{pcr}}(G)+\mathop{\mathrm{bds}}(G))\] (if $k$ is odd, we set $a_{2s}=0$). We fix a value of $k$ so that $\mathop{\mathrm{pcr}}(G_i)+\mathop{\mathrm{bds}}(G_i)\leq\frac{1}{5}(\mathop{\mathrm{pcr}}(G)+\mathop{\mathrm{bds}}(G))$ is guaranteed for every $i$; $k$ is independent of $G$ and $S$ and it will remain constant throughout the rest of the proof.

We shall construct a drawing which maps the vertices of $G_i$ to points in $S_i$ for every $i$; this is achieved by recursively applying the process above, narrowing the possible locations of the vertices at every step. To be precise, if at some point we have a subgraph $H\subset G$ of order at least $k$ whose vertices must be represented by points of a subset $S'\subset S$, then we use Corollary \ref{teo:same-type disjoint} to find $k$ subsets of $S'$ and Lemma \ref{teo:carefulpartition} to partition $H$ into $k$ subgraphs, which get assigned to the point sets. Repeat this until every resulting subgraph has order less than $k$ and then map the vertices (injectively) to any point in the corresponding subset of $S$. For the procedure to work, all the subsets of $S$ that remain at the end must contain at least $k-1$ points, this will be the case so long as \[\lvert S\rvert\geq k\cdot c(k)^{\lceil\log_k n\rceil},\] where $c(k)$ is the constant given by Corollary \ref{teo:same-type disjoint}. The above inequality holds if $C$ is large enough.

We bound the number of crossings in the resulting drawing as in the proof of Theorem \ref{teo:pcrregularbound}. The partition scheme of $G$ used above can be represented by a rooted tree $T$ (the leafs of $T$ correspond to vertices of $G$);  this time, $T$ has maximum degree $k$. Define $G_t$, $t(e)$ and $E(t)$ as before and for every vertex $t$ of $T$ let $\ell(t)$ denote the distance from $t$ to the root.

The fact that the sets produced by Corollary \ref{teo:same-type disjoint} have pairwise disjoint convex hulls implies that if two edges $e$ and $e'$ cross, then either $t(e)$ is an ancestor of $t(e')$ or $t(e')$ is an ancestor of $t(e)$. A was  done earlier, a crossing between edges $e$ and $e'$ is charged to either $e$ or $e'$, depending on which of $t(e)$ and $t(e')$ is closer to the root. Let $d\geq0$ be an integer. Consider an arbitrary edge $e$ of $G$ and write $t=t(e)$; by Observation \ref{obs:same-type}, there are at most $2^d$ vertices $t'$ of $T$ such that the following holds: $\ell(t')=\ell(t)+d$ and $e$ crosses at least one edge of $E(t')$. Recall that $\lvert E(t')\rvert=O\big(\sqrt{\mathop{\mathrm{pcr}}(G_t')+\mathop{\mathrm{bds}}(G_t')}\big)$, hence the choice of $k$ ensures that \[\lvert E(t')\rvert=O\left(\frac{1}{5^{d/2}}\sqrt{\mathop{\mathrm{pcr}}(G_t)+\mathop{\mathrm{bds}}(G_t)}\right).\] Summing over all possibilities for $t'$, we get at most \[O\left(\left(\frac{4}{5}\right)^{d/2}\sqrt{\mathop{\mathrm{pcr}}(G_t)+\mathop{\mathrm{bds}}(G_t)}\right),\] thus the number of crossings charged to $e$ is bounded by $O\big(\sqrt{\mathop{\mathrm{pcr}}(G_t)+\mathop{\mathrm{bds}}(G_t)}\big)$, and the total number of crossings charged to edges that were split at $t$ is at most $O\big(\lvert E(t)\rvert\sqrt{\mathop{\mathrm{pcr}}(G_t)+\mathop{\mathrm{bds}}(G_t)}\big)=O(\mathop{\mathrm{pcr}}(G_t)+\mathop{\mathrm{bds}}(G_t))$. As in the proof of Theorem \ref{teo:pcrregularbound}, Observation \ref{obs:partition} yields that the total number of crossings is no more than $O\left(\log n(\mathop{\mathrm{pcr}}(G)+\mathop{\mathrm{bds}}(G))\right)$.
\end{proof}

\section{Further research}\label{sec:final}

As mentioned in the introduction, determining whether $\mathop{\mathrm{cr}}(G)=\mathop{\mathrm{pcr}}(G)$ for every graph is still open. 

Lemma \ref{teo:redraw} performs particularly bad when the number of crossing edges and the number of crossings have the same order of magnitude, while Theorem \ref{teo:pcrregularbound} gives no information whatsoever for graphs whose crossing number is at most linear (in the number of vertices). By the celebrated crossing lemma (\cite{crossinglemma},\cite{crossinglemma2}), these situations can occur only for graphs with a linear number of edges. In an attempt to provide better bounds in the case that the number of edges is linear, we present a minor result concerning toroidal graphs\footnote{A graph is \textit{toroidal} if it can be drawn on a torus without crossings.}.

\begin{theorem}
Let $G$ be a toroidal graph of maximum degree $\Delta$, then  $\mathop{\mathrm{cr}}(G)=O(\Delta^2 \mathop{\mathrm{pcr}}(G))$.
\end{theorem}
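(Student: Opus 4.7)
\medskip
\noindent\textbf{Plan.} The plan is to couple an optimal pair-crossing drawing of $G$ with a known linear upper bound on the crossing number of toroidal graphs of bounded degree. Take a drawing $D$ of $G$ realizing $k=\mathop{\mathrm{pcr}}(G)$, and let $H\subseteq G$ be the subgraph consisting of the edges that participate in at least one crossing of $D$, with $G':=G-E(H)$ the remaining ``empty'' edges. Then $|E(H)|\le 2k$, and by definition the edges of $G'$ do not cross anything in $D$, so $G'$ inherits a plane embedding from $D$. Since $G$ is toroidal of maximum degree $\Delta$, so is $H$.

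The crucial ingredient I would invoke is the following fact from the bounded-genus literature: any toroidal graph $F$ of maximum degree $\Delta$ with $m$ edges satisfies $\mathop{\mathrm{cr}}(F)=O(\Delta^2 m)$. A short proof takes an embedding of $F$ on the torus, cuts the torus open along two short non-contractible cycles that cross few edges of $F$, and re-routes the cut edges across the resulting square; each re-routed edge meets $O(\Delta\, m)$ other edges, and the number of edges that need to be re-routed is controlled by the (bounded-degree) width of the toroidal embedding. Applied to $H$, this yields a plane drawing of $H$ with $O(\Delta^2\, k)=O(\Delta^2\mathop{\mathrm{pcr}}(G))$ crossings.

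What remains is to assemble a drawing of all of $G$ out of this good drawing of $H$ and the plane drawing of $G'$. The key structural observation is that, in $D$, every edge of $H$ is entirely contained in a single face of $G'$: its curve cannot leave such a face without crossing one of the bounding edges of $G'$, which are empty. Consequently the crossings of $D$ decompose face-by-face: for each face $F$ of $G'$ we obtain a toroidal, max-degree-$\Delta$ subgraph $H_F$ lying inside the disk $F$, with $k_F$ crossing pairs, and $\sum_F k_F=k$. Re-drawing each $H_F$ inside $F$ via the toroidal bound contributes $O(\Delta^2 k_F)$ crossings, summing to the desired $O(\Delta^2\mathop{\mathrm{pcr}}(G))$.

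The main obstacle is precisely this final per-face step: inside the disk $F$, the vertices of $H_F$ are pinned to their prescribed cyclic positions on the boundary, so what one really needs is the \emph{constrained} crossing number of $H_F$ as a disk drawing with boundary vertices fixed, rather than its unconstrained crossing number. Verifying that the toroidal linear-crossing bound survives this boundary constraint—plausibly at the cost of one extra factor of $\Delta$, which would account for the $\Delta^2$ in the theorem—is the delicate point I would need to flesh out, perhaps by routing each edge of $H_F$ close to a path along the boundary of $F$ and exploiting the bounded degree to limit the resulting crossings.
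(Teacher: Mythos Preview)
Your approach is quite different from the paper's, and it contains a genuine gap at exactly the point you flag as ``delicate.''

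The paper does not try to rebuild a good drawing out of a $\mathop{\mathrm{pcr}}$-optimal one. Instead it piggybacks on the Hlin\v{e}n\'y--Salazar $O(\Delta^2)$-approximation algorithm for the crossing number of toroidal graphs. That algorithm produces a drawing with at most $O(\Delta^2)\cdot L$ crossings, where $L$ is a lower bound on $\mathop{\mathrm{cr}}(G)$ obtained from a $C_n\times C_n$ minor of bounded degree. The paper's point is that the two ingredients behind this lower bound---$\mathop{\mathrm{cr}}(C_n\times C_n)=\Omega(n^2)$, and $\mathop{\mathrm{cr}}(G)=\Omega(\mathop{\mathrm{cr}}(H))$ when $H$ is a bounded-degree minor of $G$---both extend verbatim to $\mathop{\mathrm{pcr}}$. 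Hence $L\le O(\mathop{\mathrm{pcr}}(G))$ as well, and the drawing the algorithm outputs already has $O(\Delta^2\mathop{\mathrm{pcr}}(G))$ crossings.

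In your outline, the unresolved step is not a technicality. Once you pin the vertices of $H_F$ in a prescribed cyclic order on $\partial F$, the fact that $H_F$ is toroidal becomes essentially useless: the toroidal embedding of $H_F$ has no relation to that cyclic order, so the ``cut the torus and reroute'' argument you sketch for $\mathop{\mathrm{cr}}(F)=O(\Delta^2 m)$ does not respect the boundary constraint. What you actually need is an \emph{anchored} statement of the form ``a disk drawing with $k_F$ crossing pairs, max degree $\Delta$, can be redrawn (boundary fixed) with $O(\Delta^2 k_F)$ crossings,'' which is at least as hard as the theorem you are trying to prove. Your fallback idea of routing each edge of $H_F$ along $\partial F$ also fails quantitatively: such an edge can pass $\Theta(|E(H_F)|)$ boundary vertices, each contributing up to $\Delta$ incident $H_F$-edges, so you only get $O(\Delta\,k_F)$ crossings \emph{per edge}, i.e.\ $O(\Delta\,k_F^2)$ in total, not $O(\Delta^2 k_F)$. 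I do not see how to close this gap without importing an idea of the Hlin\v{e}n\'y--Salazar type.
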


\begin{proof}[Proof sketch]
Hliněný and Salazar \cite{toroidal} provided an $O(\Delta^2)$-approximation algorithm for the crossing number of toroidal graphs. The proof of the correctness of the approximation depends mainly on the two following facts: 

First, that $\mathop{\mathrm{cr}}(C_n\times C_n)=\Omega(n^2)$. While determining the exact value of  $\mathop{\mathrm{cr}}(C_n\times C_n)$ is an important open problem, this bound is well known (see \cite{cycleproducts}, for example). It is not hard check that this result holds for the pair crossing number as well. 

Secondly, if $H$ is a minor of $G$ and $H$ has maximum degree at most $4$ then $\mathop{\mathrm{cr}}(G)=\Omega(\mathop{\mathrm{cr}}(H))$ (\cite{minorcrfirst},\cite{minorcr}). The proof in \cite{minorcr} extends almost verbatim to show that, under these conditions, $\mathop{\mathrm{pcr}}(G)=\Omega(\mathop{\mathrm{pcr}}(H))$. These observations imply that the algorithm also provides an $O(\Delta^2)$-approximation for the pair crossing number, and the theorem follows.
\end{proof}

It seems likely that a similar result holds for bounded genus graphs. Wood and Telle \cite{planarwidth} introduced planar decompositions of graphs and showed that they are closely related to the crossing number. Using these decompositions, the proved that graphs with bounded degree which exclude a  fixed minor have linear crossing number, it might be interesting to try and extend some of their results to the pair crossing number. In another paper, Hliněný and Salazar \cite{almostplanar} studied the crossing number of almost planar graphs\footnote{A graph is \textit{almost planar} if it can be made planar by deleting a single edge.}, their results can be adapted to obtain the following theorem.

\begin{theorem}
If $G$ is almost planar of maximum degree $\Delta$, then $\mathop{\mathrm{cr}}(G)\leq\Delta\mathop{\mathrm{pcr}}(G)$. Furthermore, if $G$ can be made planar by deleting $k$ edges and it has maximum degree $\Delta$, then $\mathop{\mathrm{cr}}(G)= k\Delta\mathop{\mathrm{pcr}}(G)+k^2$.
\end{theorem}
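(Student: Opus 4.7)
The plan is to revisit the $O(\Delta)$-approximation of Hliněný and Salazar \cite{almostplanar} for the crossing number of almost planar graphs and observe that every step of their argument only uses how many \emph{distinct} edges are crossed by a removed edge, not the total number of crossings. Concretely, the key structural ingredient we need to extract from \cite{almostplanar} is the following: if $G-e$ is planar and there exists some drawing of $G$ in which $e$ crosses $\ell$ distinct edges of $G-e$, then $G$ admits a drawing in which $G-e$ is drawn without crossings and $e$ crosses at most $\Delta\cdot\ell$ edges. Informally, the planar embedding of $G-e$ that one obtains from the given drawing contains a face sequence from one endpoint of $e$ to the other which uses only edges in the crossed set, and rerouting $e$ through this sequence loses at most a factor of $\Delta$ because each vertex meets at most $\Delta$ incident edges that bound its faces.

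With this lemma in hand, the first part is immediate. Start with a drawing $D$ of $G$ attaining $\mathop{\mathrm{pcr}}(G)$, and let $\ell$ denote the number of distinct edges of $G-e$ that $e$ crosses in $D$. Each such edge contributes at least one crossing pair involving $e$, so $\ell\leq \mathop{\mathrm{pcr}}(G)$. The structural lemma then produces a drawing of $G$ with at most $\Delta\cdot\ell\leq\Delta\cdot\mathop{\mathrm{pcr}}(G)$ crossings, proving $\mathop{\mathrm{cr}}(G)\leq\Delta\cdot\mathop{\mathrm{pcr}}(G)$.

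For the second part, let $E^*=\{e_1,\dots,e_k\}$ be an edge set whose removal planarizes $G$, and take a drawing $D$ achieving $\mathop{\mathrm{pcr}}(G)$. For each $i$, the edge $e_i$ crosses at most $\mathop{\mathrm{pcr}}(G)$ distinct edges of $G-E^*$ in $D$. Fix a planar drawing of $G-E^*$ and add the edges of $E^*$ one by one, routing $e_i$ as in the first part so that it crosses at most $\Delta\cdot\mathop{\mathrm{pcr}}(G)$ edges of $G-E^*$; this contributes at most $k\Delta\cdot\mathop{\mathrm{pcr}}(G)$ crossings of the first kind. A standard perturbation afterwards guarantees that any two members of $E^*$ cross at most once, adding at most $\binom{k}{2}\leq k^2$ further crossings. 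Summing the two contributions yields $\mathop{\mathrm{cr}}(G)\leq k\Delta\cdot\mathop{\mathrm{pcr}}(G)+k^2$.

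The main obstacle is verifying the structural lemma in this pcr-friendly form, since \cite{almostplanar} phrases their result in terms of the total crossings used by $e$ rather than the number of distinct edges crossed. The task is to isolate inside their argument the fact that the face-sequence they construct depends only on the \emph{set} of edges crossed by $e$ (not on the multiplicity of each crossing) and that converting this sequence into an actual crossing count loses at most a multiplicative factor of $\Delta$; once this has been checked, the rest is bookkeeping.
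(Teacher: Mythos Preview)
The paper itself gives no proof of this theorem beyond the sentence ``their results can be adapted to obtain the following theorem'', so your outline is precisely the kind of adaptation the authors have in mind, and for the almost-planar case ($k=1$) it is on target: the Hliněný--Salazar lower-bound argument indeed only depends on \emph{which} edges $e$ crosses, not on how many times each is crossed, so the combinatorial quantity they bound above by $\mathop{\mathrm{cr}}(G)$ is in fact bounded by $\mathop{\mathrm{pcr}}(G)$, and their $\Delta$-factor upper bound then yields $\mathop{\mathrm{cr}}(G)\le\Delta\,\mathop{\mathrm{pcr}}(G)$.

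Your argument for general $k$, however, has a real gap. The structural lemma, as you state it (and as it emerges from \cite{almostplanar}), guarantees that \emph{some} planar embedding of $G_0$ admits a routing of $e$ with at most $\Delta\ell$ crossings; it does not assert that \emph{every} planar embedding does, nor that the good embedding depends only on $D$ restricted to $G_0$. When you write ``Fix a planar drawing of $G-E^*$ and add the edges of $E^*$ one by one, routing $e_i$ as in the first part'', you are implicitly assuming that a single embedding of $G-E^*$ is simultaneously good for all $e_i$. But applying the lemma separately to each $(G-E^*)+e_i$ may return $k$ different planar embeddings of $G-E^*$, and the dual distance between the endpoints of $e_j$ in the embedding tailored to $e_i$ can be far larger than $\ell_j$. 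To close the gap you must either argue that the planar embedding produced by the Hliněný--Salazar construction depends only on $D$ restricted to $G-E^*$ (and hence is the same for every $i$), which is plausible but needs to be checked against their actual proof, or give a separate mechanism for merging the $k$ individual drawings into one. Without this step the bound $k\Delta\,\mathop{\mathrm{pcr}}(G)+k^2$ does not follow from what you have written.
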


The \textit{monotone crossing number} of $G$ is the least number of crossings over all monotone drawings\footnote{A drawing of a graph is \textit{monotone} if every vertical line intersects each edge in at most one point.}, and we denote it by $\mathop{\mathrm{mcr}}(G)$. The \textit{monotone pair crossing number}, $\mathop{\mathrm{mpcr}}(G)$, is defined analogously. Valtr \cite{monotone} proved that $\mathop{\mathrm{mcr}}(G)\leq 4\mathop{\mathrm{mpcr}}(G)$, and this result has not yet been improved.

Regarding Theorem \ref{teo:generaldrawing}, we have the following problem:

\begin{problem}
Determine the least constant $C$ such that the following holds: For any graph $G$ of order $n$ and any set $S$ of points in general position on the plane with $\lvert S\rvert\geq n^C$, there is a drawing of $G$ on $S$ with no more than \[O\left(\mathop{\mathrm{polylog}} n\cdot\left(\mathop{\mathrm{pcr}}(G)+\mathop{\mathrm{ssqd}}(G)\right)\right)\] crossings.
\end{problem}

We can not rule out the possibility that $C=1$ works.

The \textit{odd crossing number} of $G$, $\mathop{\mathrm{ocr}}(G)$, is the least number of pairs of edges with an odd number of crossings over all drawings of the graph. As with the pair crossing number, Pach and Tóth \cite{whichcrossingnumber} asked whether $\mathop{\mathrm{ocr}}(G)=\mathop{\mathrm{cr}}(G)$ for every graph, and showed that $\mathop{\mathrm{cr}}(G)=\mathop{\mathrm{ocr}}(G)^2$ always holds. Pelsmajer, Schaefer and  Štefankovič \cite{oddcrisnotcr} constructed a graph with $\mathop{\mathrm{ocr}}(G)\neq\mathop{\mathrm{cr}}(G)$; this was later improved by Tóth \cite{tothodd}, who presented examples of graphs satisfying $0.855\mathop{\mathrm{pcr}}(G)\geq\mathop{\mathrm{ocr}}(G)$. The $\mathop{\mathrm{cr}}(G)=\mathop{\mathrm{ocr}}(G)^2$ bound, however, has not been yet been improved. Pach and Tóth \cite{thirteenproblems} asked whether a result in the vein of Theorem \ref{teo:bwpcr} holds for the odd crossing number. Adapting a technique of Kolman and Matoušek \cite{matousekpcr}, Pach and Tóth \cite{bwocr} obtained that $\mathop{\mathrm{bw}}(G)=O\big(\log n^3\sqrt{\mathop{\mathrm{ocr}}(G)+\mathop{\mathrm{ssqd}}(G)}\big)$, but it seem like getting rid of the polylog factor would require some new ideas.

\bibliographystyle{plain}
\bibliography{refs}

\end{document}